\newcommand{\rright}{\right}
\newcommand{\lleft}{\left}
\newcommand{\rrVert}{\Vert}
\newcommand{\rrvert}{\vert}
\newcommand{\llVert}{\Vert}
\newcommand{\llvert}{\vert}
\newcommand{\G}{\mathcal{G}}
\newcommand{\X}{X}
\newcommand{\1}{\mathbf{1}}
\newcommand{\s}{\sigma}
\renewcommand{\tt}{\mathbf{t}}
\newcommand{\uu}{\mathbf{u}}
\newcommand{\vv}{\mathbf{v}}
\renewcommand{\P}{\mathrm{P}}
\newcommand{\E}{\mathrm{E}}
\newcommand{\R}{\mathbf{R}}
\renewcommand{\d}{\delta}
\renewcommand{\O}{\varnothing}
\newcommand{\F}{\mathcal{F}}
\newcommand{\B}{B}
\renewcommand{\d}{\mathrm{d}}
\newcommand{\ee}{\mathrm{e}}
\newtheorem{proposition}[theorem]{Proposition}
\newtheorem{theorem}{Theorem}[section]
\newtheorem{lemma}[theorem]{Lemma}
\newcommand{\eqref}[1]{(\ref{#1})}
\newcommand{\supp}{\operatorname{supp}}
\renewcommand{\backslash}{\setminus}
\renewcommand{\bar}{\overline}
\renewcommand{\hat}{\widehat}
\def\sfrac#1#2{#1/#2}
\begin{document}
\begin{frontmatter}

\title{Brownian motion and thermal capacity\thanksref{T1}}
\runtitle{Brownian motion and thermal capacity}

\begin{aug}
\author[A]{\fnms{Davar} \snm{Khoshnevisan}\ead[label=e1]{davar@math.utah.edu}\ead[label=u1,url]{http://www.math.utah.edu/\textasciitilde davar}}
\and
\author[B]{\fnms{Yimin} \snm{Xiao}\corref{}\ead[label=e2]{xiao@stt.msu.edu}\ead[label=u2,url]{http://www.stt.msu.edu/\textasciitilde xiaoyimi}}
\runauthor{D. Khoshnevisan and Y. Xiao}
\affiliation{University of Utah and Michigan State University}
\address[A]{Department of Mathematics\\
University of Utah\\
Salt Lake City, Utah 84112-0090\\
USA\\
\printead{e1}\\
\printead{u1}} 
\address[B]{Department of Statistics and Probability\\
Michigan State University \\
C-413 Wells
Hall\\
619 Red Cedar Road\\
East Lansing, Michigan 48824\\
USA\\
\printead{e2}\\
\printead{u2}}
\end{aug}
\thankstext{T1}{Supported in part by NSF Grants
DMS-10-06903 and DMS-13-07470.}

\received{\smonth{10} \syear{2012}}
\revised{\smonth{12} \syear{2013}}

%
\begin{abstract}
Let $W$ denote $d$-dimensional Brownian motion.
We find an explicit formula for the essential supremum\vspace*{1pt}
of Hausdorff dimension
of $W(E)\cap F$, where $E\subset(0,\infty)$
and $F\subset\R^d$ are arbitrary nonrandom compact
sets. Our formula is related intimately to the
thermal capacity of Watson [\textit{Proc. Lond. Math. Soc.} (3) \textbf{37} (1978) 342--362].
We prove also that when $d\ge2$, our formula can be
described in terms of the Hausdorff dimension
of $E\times F$, where $E\times F$
is viewed as a subspace of space time.
\end{abstract}

%
\begin{keyword}[class=AMS]
\kwd[Primary ]{60J65}
\kwd{60G17}
\kwd[; secondary ]{28A78}
\kwd{28A80}
\kwd{60G15}
\kwd{60J45}
\end{keyword}
\begin{keyword}
\kwd{Brownian motion}
\kwd{thermal capacity}
\kwd{Euclidean and space--time Hausdorff dimension}
\end{keyword}

\end{frontmatter}

\section{Introduction}\label{sec1}

Let $W:=\{W(t)\}_{t\ge0}$ denote standard
$d$-dimensional\break Brownian motion where $d\ge1$.
The principal aim of this paper is to describe
the Hausdorff dimension $\dim_{_{\mathrm{H}}}(W(E)\cap F)$ of
the random intersection set $W(E)\cap F$, where
$E$ and $F$ are compact subsets of
$(0,\infty)$ and $\R^d$, respectively.
This endeavor solves what appears to be
an old problem in the folklore of Brownian motion; see M\"orters and
Peres \cite{MoertersPeres}, page 289.

In general, the Hausdorff dimension of $W(E)\cap F$ is
a random variable, and hence we seek only to compute
the $L^\infty(\P)$-norm of that Hausdorff dimension.
The following example---due to Gregory Lawler---highlights the
preceding assertion: Consider $d=1$,
and set $E:=\{1\}\cup[2,3]$ and $F:= [1,2]$. Also, consider the
two events:
%
\begin{eqnarray}
A_1 &:=&\bigl\{ 1\le W(1) \le2, W\bigl([2,3]\bigr)\cap[1,2] =\varnothing\bigr\},\nonumber
\\[-8pt]\\[-8pt]
A_2 &:=& \bigl\{ W(1)\notin[1,2], W\bigl([2,3]\bigr)\subset[1,2]
\bigr\}. \nonumber%
\end{eqnarray}
Evidently, $A_1$ and $A_2$ are disjoint; and each has positive
probability. However, $\dim_{_{\mathrm{H}}}(W(E)\cap F)=0$ on $A_1$,
whereas $\dim_{_{\mathrm{H}}}(W(E)\cap F)=1$ on $A_2$. Therefore,
$\dim_{_{\mathrm{H}}}(W(E)\cap F)$ is nonconstant, as asserted.

Our first result describes our contribution in the case that $d\ge2$.
In order to describe that contribution, let us define $\varrho$ to be
the \emph{parabolic
metric} on \textup{``}space time\textup{''} $\R_+\times\R^d$, that is,
%
\begin{equation}
\varrho \bigl( (s,x); (t,y) \bigr):= \max \bigl( |t-s|^{1/2},
\|x-y\| \bigr).
\end{equation}
The metric space $\mathbf{S}:= (\R_+\times\R^d,\varrho)$ is also called
\emph{space time}, and Hausdorff dimension
of the compact set $E\times F$---viewed as a set in $\mathbf{S}$---is
denoted by $\dim_{_{\mathrm{H}}}(E\times F;\varrho)$. That is,
$\dim_{_{\mathrm{H}}}(E\times F;\varrho)$ is the infimum of $s\ge0$ for which
%
\begin{equation}
\lim_{\varepsilon\to0} \inf \Biggl(\sum_{j=1}^\infty
\bigl|\operatorname{\varrho\mbox{-diam}}(E_j\times F_j)\bigr|^s
\Biggr)<\infty,
\end{equation}
where the infimum is taken
over all closed covers $\{E_j\times F_j\}_{j=1}^\infty$ of $E\times F$
with $\operatorname{\varrho\mbox{-diam}}(E_j\times F_j)< \varepsilon$,
and \textup{``}$\operatorname{\varrho\mbox{-diam}}(\Lambda)$\textup{''} denotes the diameter
of the space--time set $\Lambda$, as measured by the metric $\varrho$.

\begin{theorem}\label{th:dimh:ii}
If $d\ge2$, then
%
\begin{equation}
\label{eq:dimh:ii} \bigl\llVert \dim_{_{\mathrm{H}}} \bigl( W(E)\cap F \bigr) \bigr
\rrVert _{L^\infty
(\P)} = \dim_{_{\mathrm{H}}} (E\times F;\varrho ) -d,
\end{equation}
where \textup{``}$\dim_{_{\mathrm{H}}}A<0$\textup{''} means \textup{``}$A=\O$.\textup{''} Display
\eqref{eq:dimh:ii} continues to hold for $d=1$, provided that \textup{``}$=$\textup{''}
is replaced by \textup{``}$\le$.\textup{''}
\end{theorem}

The following example shows that \eqref{eq:dimh:ii}
does not always hold for $d=1$:
Consider $E:=[0,1]$ and $F:=\{0\}$. Then
a computation on the side shows that
$\dim_{_{\mathrm{H}}}(W(E)\cap F)=0$ a.s.,
whereas $\dim_{_{\mathrm{H}}}(E\times F;\varrho)-d=1$.

On the other hand, Proposition~\ref{pr:positiveLeb} below shows that
if $|F|>0$, where $| \cdot|$ denotes the Lebesgue measure, then
$W(E)\cap F$ shares the properties of the image set $W(E)$.

%
\begin{proposition}\label{pr:positiveLeb}
If $F \subset\R^d$ \textup{(}$d \ge1$\textup{)} is compact and
$|F|>0$, then
%
\begin{equation}
\label{Eq:posiLeb} \bigl\llVert \dim_{_{\mathrm{H}}} \bigl( W(E)\cap F \bigr) \bigr
\rrVert _{L^\infty(\P)} = \min\{d, 2 \dim_{_{\mathrm{H}}}E\}.
\end{equation}
If, in addition, $\dim_{_{\mathrm{H}}}E > \sfrac{1}{2}$ and $d = 1$, then
$\P\{|W(E)\cap F|>0\}>0$.
\end{proposition}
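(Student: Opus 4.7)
The plan is to manufacture, for every $\alpha$ strictly below the target $\min\{d,2\dimh E\}$, a random Borel measure supported in $W(E)\cap F$ whose total mass has positive expectation and whose $\alpha$-energy has finite expectation; both conclusions of the proposition will then follow. The upper bound in \eqref{Eq:posiLeb} is immediate from $W(E)\cap F\subset W(E)$ and the classical formula $\dimh W(E) = \min\{d,2\dimh E\}$ a.s.

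For the lower bound, fix $\alpha<\min\{d,2\dimh E\}$ and, by Frostman's lemma, pick a probability measure $\mu$ on $E$ with $I_{\alpha/2}(\mu)<\infty$, which is possible since $\alpha/2<\dimh E$. Set $\nu:=\mu\circ W^{-1}$, a.s.\ supported on $W(E)$, and let $\nu_F$ denote its restriction to $F$, which is carried by $W(E)\cap F$. Writing $E\subset[a,b]$ with $a>0$, uniform positivity of the Gaussian density $p_t(x)$ on the compact set $[a,b]\times F$ gives $\E[\nu(F)]=\int_E\!\int_F p_t(x)\,dx\,\mu(dt)\ge c|F|>0$, so $\P\{\nu(F)>0\}>0$. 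The crux is the energy bound: for $s<t$ in $[a,b]$ and $\alpha<d$, conditioning on $W(s)$ yields
\begin{equation*}
\E\bigl[|W(s)-W(t)|^{-\alpha}\1_F(W(s))\1_F(W(t))\bigr]
\le \|p_s\|_\infty\, |F|\int_{\R^d}|z|^{-\alpha}p_{t-s}(z)\,dz
\le C|t-s|^{-\alpha/2},
\end{equation*}
so $\E[I_\alpha(\nu_F)]\le C\,I_{\alpha/2}(\mu)<\infty$ and $I_\alpha(\nu_F)<\infty$ a.s. On the positive-probability event $\{\nu(F)>0\}$, $\nu_F$ is then a nonzero finite Borel measure of finite $\alpha$-energy supported in $W(E)\cap F$, so Frostman's theorem yields $\dimh(W(E)\cap F)\ge\alpha$ there. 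Letting $\alpha\uparrow\min\{d,2\dimh E\}$ completes \eqref{Eq:posiLeb}.

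For the final assertion, take $d=1$, $\dimh E>1/2$, and choose $\mu$ with $I_{1/2}(\mu)<\infty$. A Plancherel-style computation gives
\begin{equation*}
\E\int_\R|\hat\nu(\xi)|^2\,d\xi
=\int\!\int\!\int_\R e^{-\xi^2|t-s|/2}\,d\xi\,\mu(ds)\mu(dt)
= c\, I_{1/2}(\mu)<\infty,
\end{equation*}
so $\nu$ almost surely has an $L^2(\R)$ density $f$. Since $\nu(W(E)^c)=0$, $\{f>0\}\subset W(E)$ modulo Lebesgue-null sets; and $\E[\nu(F)]>0$ still gives $\int_F f>0$ with positive probability. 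On that event $|\{y\in F:f(y)>0\}|>0$, and this set lies in $W(E)\cap F$, so $|W(E)\cap F|>0$ with positive probability. The only real work is the transition-density estimate in the second paragraph; the rest is standard Frostman and first-moment bookkeeping.
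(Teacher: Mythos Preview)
Your argument is correct and reaches the same conclusions as the paper, but by a genuinely shorter path. The paper builds a sequence of mollified random measures
\[
\nu_n(A)=(2\pi n)^{d/2}\int_{E\times F}\1_A(W(s))\,\ee^{-n\|W(s)-x\|^2/2}\,\sigma(\d s)\,\d x,
\]
establishes uniform bounds $\E\|\nu_n\|\ge c_1$, $\E\|\nu_n\|^2\le c_2$, $\E[I_\gamma(\nu_n)]\le c_3$, and then extracts a subsequential weak limit $\nu$ carried by $W(E)\cap F$; the positive-Lebesgue-measure statement is handled analogously via $\sup_n\E\int|\widehat{\nu_n}|^2<\infty$. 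You bypass the approximation entirely by working directly with the occupation measure $\nu=\mu\circ W^{-1}$ and its restriction $\nu_F$: the point is that when $|F|>0$ the expected mass $\E[\nu(F)]=\int_E\int_F p_t(x)\,\d x\,\mu(\d t)$ is already positive, so no limiting procedure is needed, and the energy and Fourier bounds are one-line scaling computations. In fact the paper's limit $\nu$ is (heuristically, and rigorously on the event of convergence) exactly your $\nu_F$, so the two proofs produce the same object; you simply write it down at once. Your route is more elementary for this proposition; the paper's heavier approximation machinery is tuned to the harder case $|F|=0$ (Theorem~\ref{th:dimh}), where $\nu(F)=0$ a.s.\ and one genuinely needs smoothing to produce a nontrivial limit.
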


When $F \subset\R^d$ satisfies $|F|>0$, it can be shown that $\dim
_{_{\mathrm{H}}} (E\times F;\varrho )
= 2 \dim_{_{\mathrm{H}}}E +d$. Hence, (\ref{Eq:posiLeb}) coincides with
(\ref{eq:dimh:ii}) when $d \ge2$.
Proposition~\ref{pr:positiveLeb} is proved by showing that, when
$|F|>0$, there exists an explicit \textup{``}smooth\textup{''} random measure on
$W(E)\cap F$.
Thus, the remaining case, and this is the most interesting case,
is when $F$ has Lebesgue measure 0.
The following result gives a suitable (though
quite complicated) formula that is
valid for all dimensions, including $d=1$.

\begin{theorem}\label{th:dimh}
If $F \subset\R^d$ \textup{(}$d \ge1$\textup{)} is compact and
$|F|=0$, then
%
\begin{equation}
\label{eq:dimh} \bigl\llVert \dim_{_{\mathrm{H}}} \bigl( W(E)\cap F \bigr) \bigr
\rrVert _{L^\infty(\P)} = \sup \Bigl\{ \gamma>0\dvtx  \inf_{\mu\in\mathcal{P}_d(E\times F)}
\mathcal{E}_\gamma(\mu)<\infty \Bigr\},
\end{equation}
where $\mathcal{P}_d(E\times F)$ denotes the collection of
all probability measures $\mu$ on $E\times F$ that are
\textup{``}diffuse\textup{''} in the sense that $\mu(\{t\}\times F)=0$
for all $t> 0$, and
%
\begin{equation}
\label{eq:I:gamma} \mathcal{E}_\gamma(\mu):= \int\int\frac{\ee^{-\|x-y\|^2/(2|t-s|)}}{
|t-s|^{d/2} \cdot\|y-x\|^{\gamma}} \mu(\d
s \,\d x) \mu(\d t \,\d y).
\end{equation}
%
\end{theorem}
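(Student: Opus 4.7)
Write $\gamma_\star$ for the right-hand side of \eqref{eq:dimh}. The plan is to prove the two inequalities $\|\dimh(W(E)\cap F)\|_{L^\infty(\P)}\ge \gamma_\star$ and $\le \gamma_\star$ separately, via the classical capacity/energy method adapted to the Gaussian--Riesz kernel
\[
K_\gamma(s,x;t,y):=|t-s|^{-d/2}\,\ee^{-\|x-y\|^2/(2|t-s|)}\,\|x-y\|^{-\gamma}
\]
appearing in $\mathcal{E}_\gamma$. Because $E\subset(0\,,\infty)$ and $F$ are compact, the one-time Gaussian density $p_s(x)=(2\pi s)^{-d/2}\ee^{-\|x\|^2/(2s)}$ is bounded above and below by strictly positive constants on the supports, and this factor will be absorbed freely.

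\textbf{Lower bound via random measures.} Fix $\gamma<\gamma_\star$ and choose $\mu\in\mathcal{P}_d(E\times F)$ with $\mathcal{E}_\gamma(\mu)<\infty$. I would introduce the mollified random measures
\[
\sigma_\epsilon(A):=\int_{E\times F}\mathbf{1}_A(W(s))\,p_\epsilon\bigl(W(s)-x\bigr)\,\mu(\d s\,\d x)\qquad(A\subset\R^d\ \text{Borel}).
\]
Using independence of Brownian increments, one checks
\[
\E[\sigma_\epsilon(\R^d)]=\int p_{s+\epsilon}(x)\,\mu(\d s\,\d x)\ \longrightarrow\ \int p_s(x)\,\mu(\d s\,\d x)>0,
\]
and, conditioning $W(t)$ on $W(s)$ and replacing $\|W(s)-W(t)\|^{-\gamma}$ by $\|x-y\|^{-\gamma}$ on the near-diagonal region forced by the mollifier,
\[
\sup_{\epsilon>0}\ \E\!\left[\iint\frac{\sigma_\epsilon(\d a)\,\sigma_\epsilon(\d b)}{\|a-b\|^\gamma}\right]\le C\,\mathcal{E}_\gamma(\mu)<\infty.
\]
Paley--Zygmund combined with tightness then yields, with positive probability, a weak subsequential limit $\sigma$ which is nonzero, has finite $\gamma$-Riesz energy, and is supported in $W(E)\cap F$ (the Gaussian mollifier forces $W(s)\approx x\in F$, and $F$ is closed). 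Frostman's lemma now gives $\dimh(W(E)\cap F)\ge\gamma$ on this event, so $\|\dimh(W(E)\cap F)\|_{L^\infty(\P)}\ge\gamma_\star$ upon letting $\gamma\uparrow\gamma_\star$.

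\textbf{Upper bound via parabolic covers.} Fix $\gamma>\gamma_\star$, so $\inf_\mu \mathcal{E}_\gamma(\mu)=\infty$. By a Frostman-type duality for $K_\gamma$, for every $\eta>0$ one can cover $E\times F$ by parabolic cylinders $C_k=[a_k\,,a_k+r_k^2]\times B(x_k\,,r_k)$ with $\max_k r_k$ arbitrarily small and
\[
\sum_k r_k^{d+\gamma}\,p_{a_k}(x_k)\ <\ \eta.
\]
The classical small-ball hitting estimate
\[
\P\bigl\{W([a\,,a+r^2])\cap B(x\,,r)\ne\varnothing\bigr\}\ \le\ C\,r^d\,p_a(x)\qquad(a\ge\inf E>0,\ r\le 1),
\]
proved by an occupation-time argument and valid in every dimension, shows that the random cover of $W(E)\cap F$ by those $B(x_k\,,r_k)$ for which $W([a_k\,,a_k+r_k^2])\cap B(x_k\,,r_k)\ne\varnothing$ has expected $\gamma$-cost at most $C\eta$. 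Letting $\eta\downarrow 0$ and invoking monotone convergence gives $\E[\mathcal{H}^\gamma(W(E)\cap F)]=0$, hence $\dimh(W(E)\cap F)\le\gamma$ almost surely.

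\textbf{Main difficulty.} The chief technical obstacle is the Frostman-type duality relating $\mathcal{E}_\gamma$ to the weighted parabolic covering functional $\sum r_k^{d+\gamma}p_{a_k}(x_k)$. The kernel $K_\gamma$ is neither a power of the parabolic metric $\varrho$ nor a product of powers of $|t-s|$ and $\|x-y\|$, so the standard Frostman lemma does not apply off the shelf: one must use the Gaussian localization (the integrand concentrates on pairs with $\|x-y\|^2\lesssim |t-s|$, where $K_\gamma\asymp \varrho^{-d}\|x-y\|^{-\gamma}$) to reduce the duality to a covering argument on parabolic cells, and then run a weighted min--max. The diffuseness hypothesis $\mu(\{t\}\times F)=0$ in the class $\mathcal{P}_d$ is indispensable, since otherwise the $|t-s|^{-d/2}$ singularity along the time-diagonal would prevent $\mathcal{E}_\gamma$ from being finite and would decouple the formula from the true value of the essential supremum.
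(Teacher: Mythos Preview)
Your lower bound sketch is in the right spirit and could likely be made rigorous; it is close to the first-moment/second-moment computation the paper carries out in the first half of its proof of Theorem~\ref{th:main2}. The upper bound, however, has a genuine gap, and in fact the key step you invoke is \emph{false}.

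You assert that if $\inf_{\mu\in\mathcal P_d(E\times F)}\mathcal E_\gamma(\mu)=\infty$ then $E\times F$ admits parabolic covers with $\sum_k r_k^{d+\gamma}p_{a_k}(x_k)<\eta$ for every $\eta>0$; since $p_{a_k}(x_k)\asymp 1$ on the compact support, this is the claim that $\mathcal C_\gamma(E\times F)=0$ forces the parabolic $(d+\gamma)$-Hausdorff content of $E\times F$ to vanish. Take $d=1$, $E=[1,2]$, $F=\{0\}$. Every $\mu\in\mathcal P_d(E\times F)$ is of the form $\nu\otimes\delta_0$, and then $\|x-y\|^{-\gamma}\equiv\infty$ in the integrand, so $\mathcal E_\gamma(\mu)=\infty$ for every $\gamma>0$; thus $\gamma_\star=0$. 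But $\dimh(E\times F;\varrho)=2$, so for any $\gamma\in(0,1)$ the parabolic $(d+\gamma)=(1+\gamma)$-content of $E\times F$ is infinite and no cover with $\sum_k r_k^{1+\gamma}<\eta$ exists. The duality you need simply does not hold for the kernel $K_\gamma$: the factor $\|x-y\|^{-\gamma}$ introduces a singularity transverse to the parabolic scaling that no gauge of the form $r^{d+\gamma}$ can see. Your ``Gaussian localization'' heuristic (that the integrand lives where $\|x-y\|^2\lesssim|t-s|$) breaks precisely because measures supported on thin $F$ can sit entirely in the complementary regime.

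The paper sidesteps this obstruction altogether. Instead of a covering argument, it runs a \emph{codimension} argument: one introduces an independent $N$-parameter additive $\alpha$-stable process $\X_\alpha$ in $\R^d$ with $d>\alpha N$, and proves (Theorem~\ref{th:main2}) that
\[
\P\bigl\{W(E)\cap\overline{\X_\alpha(\R^N_+)}\cap F\neq\varnothing\bigr\}>0
\quad\Longleftrightarrow\quad
\mathcal C_{d-\alpha N}(E\times F)>0.
\]
Since $\overline{\X_\alpha(\R^N_+)}$ has stochastic codimension $d-\alpha N$, this converts $\mathcal C_{d-\alpha N}(E\times F)=0$ directly into $\dimh(W(E)\cap F)\le d-\alpha N$ a.s., with no Hausdorff-content intermediary. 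Varying $\alpha$ and $N$ so that $d-\alpha N$ sweeps through $(0,d)$ gives both inequalities in \eqref{eq:dimh}. The hard half of Theorem~\ref{th:main2} (zero capacity $\Rightarrow$ a.s.\ non-intersection) is obtained by constructing a last-exit--type measure $\mu\in\mathcal P_d(E\times F)$ from a random hitting point and bounding a conditional second moment via Cairoli's maximal inequality for the commuting $(N{+}1)$-parameter filtration generated by $W$ and $\X_\alpha$. That machinery is what replaces the Frostman duality you were hoping for.
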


Theorems \ref{th:dimh:ii} and \ref{th:dimh} are the main results
of this paper. But it seems natural that we also say
a few words about when $W(E)\cap F$ is nonvoid
with positive probability, simply because when $\P\{W(E)\cap F=\O\}=1$
there is no point in computing the Hausdorff dimension of
$W(E)\cap F$.

It is a well-known folklore fact that $W(E)$ intersects $F$ with
positive probability if and only if $E\times F$ has positive thermal
capacity in the sense of Watson \cite{Watson:TC,Watson}. (For a simpler
description, see Proposition~\ref{pr:polar} below.)
This folklore fact can be proved by combining
the results of Doob \cite{Doob} on parabolic potential theory;
specifically, one applies the analytic theory of
\cite{Doob}, Chapter XVII, in the context of space--time Brownian motion
as in \cite{Doob}, Section~13, pages 700--702.
When combined with Theorem~3 of
Taylor and Watson \cite{Taylor:TW}, this folklore fact tells us
the following: If
%
\begin{equation}
\label{eq:TW} \dim_{_{\mathrm{H}}}(E\times F;\varrho)>d,
\end{equation}
then $W(E)\cap F$ is nonvoid with positive probability;
but if $\dim_{_{\mathrm{H}}}(E\times F;\varrho)<d$ then $W(E)\cap F=\O$
almost surely. Kaufman and Wu \cite{Kaufman-Wu} contain
related results. And our Theorem~\ref{th:dimh:ii} states that
the essential supremum of the Hausdorff dimension of $W(E)\cap F$
is the slack in the Taylor--Watson condition \eqref{eq:TW}
for the nontriviality of $W(E)\cap F$.

The proof of Theorem~\ref{th:dimh} yields a simpler interpretation
of the assertion that $E\times F$ has positive thermal capacity,
and relates one of the energy forms that appear in Theorem~\ref{th:dimh},
namely $\mathcal{E}_0$, to the present context. For the sake of completeness,
we state that interpretation
next in the form of Proposition~\ref{pr:polar}. This proposition
provides extra information on the
equilibrium measure---in the sense of parabolic potential theory---for
the thermal capacity of $E \times F$ when $|F|=0$.
[When $|F|>0$, there is nothing to worry about, since
$\P\{W(E)\cap F\neq\O\}>0 $ for every nonempty Borel set
$E \subset(0, \infty)$.]

%
\begin{proposition}\label{pr:polar}
Suppose $F \subset\R^d$ \textup{(}$d \ge1$\textup{)} is compact
and has Lebesgue measure 0.
Then $\P\{W(E)\cap F\neq\O\}>0$ if and only if
there exists a probability measure $\mu\in\mathcal{P}_d(E\times F)$
such that $\mathcal{E}_0(\mu)<\infty$.
\end{proposition}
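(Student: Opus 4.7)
\emph{Plan.} Proposition \ref{pr:polar} is the $\gamma = 0$ specialisation of the energy characterisation that Theorem \ref{th:dimh} makes explicit. The strategy is: prove sufficiency by a second-moment (Paley--Zygmund) lower bound using a Gaussian regularisation of the kernel of $\mathcal{E}_0$; and prove necessity by invoking the folklore Doob--Watson theorem recalled in the introduction, translating Watson's asymmetric thermal energy into the symmetric form $\mathcal{E}_0$, and enforcing the diffuseness condition via $|F|=0$.

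\emph{Sufficient direction.} Given $\mu \in \mathcal{P}_d(E\times F)$ with $\mathcal{E}_0(\mu)<\infty$, set
\[
J_\e := \int_{E\times F} p_\e\bigl(W(s)-x\bigr)\, \mu(\d s\,\d x), \qquad p_\e(z) := (2\pi\e)^{-d/2} \ee^{-\|z\|^2/(2\e)}.
\]
Using $W(s)\sim N(0, sI)$, Gaussian integration gives $\E J_\e = \int p_{s+\e}(x)\,\mu(\d s\,\d x)$, bounded below by a positive constant uniformly in $\e\in(0,1)$ because $E \subset [a,b]\subset(0,\infty)$ and $F$ is bounded. For the second moment, conditioning $W(t)$ on $W(s)$ for $s<t$ and applying the Gaussian product identity $p_a(u-x)\,p_b(u-y) = p_{a+b}(x-y)\cdot p_{ab/(a+b)}(u - m_{x,y})$ with $m_{x,y}=(bx+ay)/(a+b)$ yields
\[
\E\bigl[p_\e(W(s)-x)\, p_\e(W(t)-y)\bigr] = p_{|t-s|+2\e}(x-y)\cdot p_{(s\wedge t)+c_\e}(m_{x,y}),
\]
whose first factor is dominated by a constant times the kernel of $\mathcal{E}_0$ uniformly in $\e$ and whose second factor is uniformly bounded. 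Hence $\sup_\e \E J_\e^2 \le C\,\mathcal{E}_0(\mu) < \infty$. Paley--Zygmund then furnishes $\inf_\e \P\{J_\e\ge c_1\}\ge c_2>0$. On $\{W(E)\cap F = \O\}$, compactness of $E$ and $F$ and continuity of $W$ force $\inf_{(s,x)\in E\times F}\|W(s)-x\|\ge \delta(\omega)>0$, so $J_\e\le p_\e(\delta)\to 0$; combining yields $\P\{W(E)\cap F\neq \O\}\ge c_2>0$.

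\emph{Necessary direction.} Assume $\P\{W(E)\cap F\neq \O\}>0$. The folklore Doob--Watson theorem (quoted in the introduction) then supplies a probability measure $\nu$ on $E\times F$ with finite Watson energy $\iint p_{t-s}(y-x)\,\mathbf{1}_{t>s}\,\d\nu\,\d\nu$---for instance, the normalised hitting distribution, i.e.\ the law of $(\tau,W(\tau))$ conditional on $\tau<\infty$, where $\tau:=\inf\{t\in E:W(t)\in F\}$. Diffuseness of this $\nu$ is forced by $|F|=0$: for any $t_0>0$,
\[
\nu(\{t_0\}\times F) \le \frac{\P(\tau=t_0)}{\P(\tau<\infty)} \le \frac{\P(W(t_0)\in F)}{\P(\tau<\infty)} = 0,
\]
since $W(t_0)\sim N(0,t_0 I)$ has a Lebesgue density and $|F|=0$. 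Hence $\nu\in\mathcal{P}_d(E\times F)$, and swapping $(s,x)\leftrightarrow(t,y)$ in the double integral gives
\[
\mathcal{E}_0(\nu) = (2\pi)^{d/2}\iint p_{|t-s|}(x-y)\,\d\nu\,\d\nu = 2(2\pi)^{d/2}\cdot(\text{Watson energy of }\nu) < \infty.
\]

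\emph{Main obstacle.} The sufficient direction is a routine second-moment computation, with the only delicate step being the extraction of positive hitting probability from $L^2$-boundedness of $J_\e$ via the compactness-induced exponential decay of $J_\e$ on the non-hitting event. The necessary direction depends crucially on the folklore Doob--Watson theorem (which I quote rather than reprove); the genuinely new input is the observation that $|F|=0$ forces the natural choice of $\nu$ (the hitting distribution) to be time-diffuse, placing it in $\mathcal{P}_d(E\times F)$ and rendering $\mathcal{E}_0$ both well-defined and comparable to the Watson energy.
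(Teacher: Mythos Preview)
Your sufficiency argument follows the same second-moment/Paley--Zygmund strategy as the paper (which refers back to \S\ref{subsec:LB}), and your extraction of positive hitting probability from the uniform $L^2$ bound via compactness is fine. However, the claim that ``the first factor $p_{|t-s|+2\e}(x-y)$ is dominated by a constant times the kernel of $\mathcal{E}_0$ uniformly in $\e$'' is false as a \emph{pointwise} statement: for fixed $\e>0$, small $|t-s|$, and $\|x-y\|$ of order one, the ratio $p_{|t-s|+2\e}(x-y)/p_{|t-s|}(x-y)$ is unbounded. What \emph{is} true is the integrated bound $\iint p_{|t-s|+2\e}(x-y)\,\d\mu\,\d\mu \le \iint p_{|t-s|}(x-y)\,\d\mu\,\d\mu$, and this is exactly the content of the paper's isoperimetric inequality (Proposition~\ref{pr:isoper}, applied with $\kappa$ trivial): one proves it for product measures via positive definiteness and Lemma~\ref{lem:FK}, then approximates. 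So the approach is right but the key inequality needs the Fourier-analytic justification, not pointwise domination.

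Your necessity argument has a more structural gap. You correctly select the hitting distribution $\nu$ (the law of $(\tau,W(\tau))$ given $\tau<\infty$) and correctly show it is diffuse using $|F|=0$; both steps match the paper exactly. But you then \emph{assert} that $\nu$ has finite Watson energy by appealing to the folklore Doob--Watson theorem. As quoted in the introduction, that folklore result only says that positive hitting probability is equivalent to positive thermal capacity; it does not identify the hitting distribution as a finite-energy measure, and that identification is nontrivial parabolic potential theory you do not supply. The paper does \emph{not} invoke the folklore theorem here. Instead it proves $\mathcal{E}_0(\nu)<\infty$ directly, by rerunning the conditional-expectation and Cairoli maximal-inequality machinery of \S\ref{subsec:UB} (with the additive stable process deleted): one bounds $\E[Z_\e(\nu)\mid\G_s]$ from below on $\{\tau<\infty\}$, squares, applies Doob's $L^2$ maximal inequality, and compares with the second-moment upper bound to obtain $\P\{\tau<\infty\}\le \text{const}/\mathcal{E}_0(\nu)$. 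That self-contained probabilistic argument is the actual substance of the proposition; your shortcut via the folklore theorem both leaves the key step unjustified and, to the extent it could be completed, would reduce the proposition to a restatement of existing parabolic potential theory rather than the independent characterisation it is intended to provide.
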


Theorems \ref{th:dimh:ii} and \ref{th:dimh} both proceed by
checking to see whether or not $W(E)\cap F$ (and a close variant of it)
intersect a sufficiently-thin random set. This so-called \textup{``}codimension
idea\textup{''} was initiated by S. J. Taylor \cite{Taylor:66} and has
been used in other situations as well
\cite{Hawkes:81,Lyons,Peres}. A more detailed account of
the history of stochastic codimension can be found
in the recent book of M\"orters and Peres
\cite{MoertersPeres}, page 287. The broad utility of this method---using
fractal percolation sets as the (thin) testing random sets---was further
illustrated by Yuval Peres \cite{Peres:99}.

Throughout this paper, we adopt the following notation:
For all integers $k\ge1$ and for every $x=(x_1,\ldots,x_k) \in\R^k$,
$\|x\|$ and $|x|$, respectively, define the $\ell^2$ and
$\ell^1$ norms of $x$. That is,
%
\begin{equation}
\|x \|:= \bigl( x_1^2+\cdots+x_k^2
\bigr)^{1/2} \quad \mbox{and}\quad  |x|:= |x_1|+\cdots+|x_k|.
\end{equation}

The rest of the paper is organized as follows. Proposition~\ref{pr:positiveLeb}
is proved in Section~\ref{Sec:posleb}. Then, in Sections~\ref{Sec:Thm-ii} and
\ref{Sec:Thm-main}, Theorems \ref{th:dimh:ii} and \ref{th:dimh} are
proved in
reverse order, since the latter is significantly harder to prove. The main
ingredient for proving Theorem~\ref{th:dimh} is Theorem~\ref
{th:main2} whose
proof is given in Section~\ref{Sec:Thm-main2}. Proposition~\ref
{pr:polar} is
proved in Section~\ref{Subsec:polar}.

\section{Proof of Proposition \texorpdfstring{\protect\ref{pr:positiveLeb}}{1.2}}\label{Sec:posleb}

The upper bound in \eqref{Eq:posiLeb} follows from the well-known fact that
$\dim_{_{\mathrm{H}}}W(E) = \min\{d, 2 \dim_{_{\mathrm{H}}}E\}$ almost
surely. In order to establish
the lower bound in \eqref{Eq:posiLeb}, we first construct a random
measure $\nu$
on $ W(E)\cap F$, and then appeal to a capacity argument. The details follow.

Choose and fix a constant $\gamma$ such that
%
\begin{equation}
0 < \gamma< \min \{d, 2\dim_{_{\mathrm{H}}}E \}.
\end{equation}
According to Frostman's theorem,
there exists a Borel probability measure $\sigma$ on $E$ such that
%
\begin{equation}
\label{Eq:sigma} \int\int\frac{\sigma(\d s) \sigma(\d t)} {|s- t|^{\gamma/2}}< \infty.
\end{equation}

For every integer $n \ge1$, we define a random measure $\mu_n$ on
$E\times F$ via
%
\begin{equation}
\label{Eq:mu1} \int f \,\d\mu_n:= (2\pi n )^{d/2} \int
_{E\times F} f(s, x) \exp \biggl(- \frac{n\|W(s) - x\|^2}{2} \biggr)\sigma(
\d s) \,\d x
\end{equation}
for every Borel measurable function $f\dvtx  E\times F \to\R_+$.
Equivalently,
%
\begin{equation}
\label{Eq:mu1:1} \hspace*{12pt}\int f \,\d\mu_n = \int_{E\times F} \sigma(
\d s) \,\d x f(s, x)\int_{\R^d} \d\xi \exp \biggl(i \bigl\langle
\xi,W(s)-x \bigr\rangle-\frac{\|\xi\|^2}{2n} \biggr),
\end{equation}
thanks to the characteristic function of a Gaussian vector.

Let $\nu_n$ be the image measure of $\mu_n$ under the random mapping
$g\dvtx  E\times F \to\R^d$ defined by $g(s, x):= W(s)$. That is,
$\int\phi\,\d\nu_n:=\int(\phi\circ g) \,\d\mu_n$ for all
Borel-measurable functions $\phi\dvtx \R^d\to\R_+$.
It follows from (\ref{Eq:mu1}) that, if $\{\nu_n\}_{n=1}^\infty$ has
a subsequence
which converges weakly to $\nu$, then $\nu$ is supported on $W(E)
\cap F$.
This $\nu$ will be the desired random measure on $W(E)\cap F$.
Thus, we plan to prove that: (i) $\{\nu_n\}_{n=1}^\infty$ indeed has
a subsequence
which converges weakly; and (ii) use this particular $\nu$ to show that
$\P\{ \dim_{_{\mathrm{H}}}(W(E)\cap F) \ge\gamma\}>0$. This will
demonstrate~\eqref{Eq:posiLeb}.

In order to carry out (i) and (ii), it suffices to verify that there
exist positive and
finite constants $c_1, c_2$ and $c_3$ such that
%
\begin{equation}
\label{Eq:nu1} \E \bigl(\|\nu_n\| \bigr) \ge c_1,\qquad  \E \bigl(\|
\nu_n\|^2 \bigr) \le c_2
\end{equation}
and
%
\begin{equation}
\label{Eq:nu2} \E\int\int\frac{\nu_n(\d x) \nu_n(\d y)}{\|x-y\|^{\gamma}}\le c_3,
\end{equation}
simultaneously for all $n \ge1$, where $\|\nu_n\|:=\nu_n(\R)$
denotes the total mass of $\nu_n$. The rest hinges on a well-known
capacity argument
that is explicitly hashed out in \cite{Kahane85}, pages 204--206; see
also \cite{KX02}, pages 75--76.

It follows from \eqref{Eq:mu1:1} and Fubini's theorem that
%
\begin{eqnarray}
\label{Eq:nu3} %
\E \bigl(\|\nu_n\| \bigr) &=& \int
_{E\times F} \sigma(\d s) \,\d x \int_{\R^d}\d\xi\E
\bigl(\ee^{i \langle\xi, W(s)-x\rangle
} \bigr) \ee^{- \|\xi\|^2/(2n)}\nonumber
\\[-8pt]\\[-8pt]
&= &\int_{E\times F} \biggl(\frac{2 \pi} {s+n^{-1}} \biggr)^{d/2}
\exp \biggl(- \frac{\|x\|^2} {2(s + n^{-1})} \biggr)\sigma(\d s)\, \d x.\nonumber %
\end{eqnarray}
Since $E \subset(0, \infty)$ is compact, we have $\inf E\ge\delta$
for some constant $\delta>0$. Hence, (\ref{Eq:nu3}) implies that
$\inf_{n\ge1}\E(\|\nu_n\|)\ge c_1$ for some constant $c_1>0$,
and this verifies the first inequality in \eqref{Eq:nu1}.
For the second inequality, we use \eqref{Eq:mu1} to see that
%
\begin{equation}
\|\nu_n\|=\|\mu_n\|=(2\pi n)^{d/2}\int
_{E\times F} \exp \biggl(-\frac{n\|W(s)-x\|^2}{2} \biggr)\sigma(\d s)\, \d x.
\end{equation}
We may replace $F$ by all of $\R^d$ in order to find that
$\|\nu_n\|\le(2\pi)^d$ a.s.; whence follows the second inequality in
\eqref{Eq:nu1}.
Similarly, we prove \eqref{Eq:nu2} by writing
\begin{eqnarray*}
&&\int\int\frac{\nu_n(\d x) \nu_n(\d y)}{\|x-y\|^\gamma} \\
&&\qquad =\int_{(E\times F)^2}
\frac{\sigma(\d s) \sigma(\d t)\, \d x \,\d y} {\|
W(t)-W(s)\|^\gamma}
\\
&&\hphantom{\qquad =\int_{(E\times F)^2}}{} \times(2\pi n)^{d} \exp \biggl(-\frac{n\|W(s)-x\|^2-n\|W(t)-y\|
^2}{2} \biggr).
\end{eqnarray*}
We may replace $F$ by $\R^d$, use the scaling property
of $W$ and the fact that $\gamma< d$ in order to see that
\[
\E\int\int\frac{\nu_n(\d x) \nu_n(\d y)}{\|x-y\|^\gamma} \le c \int\int\frac{\sigma(\d s) \sigma(\d t)}{|s-t|^{\gamma/2}}\qquad  \mbox{a.s.}
\]
Therefore, \eqref{Eq:nu2} follows from \eqref{Eq:sigma}.

Finally, we prove the last statement in Proposition~\ref{pr:positiveLeb}.
Since $\dim_{_{\mathrm{H}}}E > \frac{1}2$, Frostman's theorem assures us that
there exists a Borel probability measure $\sigma$ on $E$ such that
(\ref{Eq:sigma})
holds with $\gamma= 1$. We construct a sequence of random measures
$\{\nu_n\}_{n=1}^\infty$ as before, and extract a subsequence
that converges weakly to a random Borel measure $\nu$
on $W(E)\cap F$ such that $\P\{\|\nu\|>0\}>0$.

Let $\widehat{\nu}$ denote the Fourier transform of $\nu$. In accord with
Plancherel's theorem, a sufficient condition for
$\P\{|W(E)\cap F|>0\}>0$ is that
$\widehat{\nu}\in L^2(\R)$.
We apply Fatou's lemma to reduce our problem to the following:
%
\begin{equation}
\label{Eq:nu11} \sup_{n\ge1} \E\int_{-\infty}^\infty
\bigl\llvert \widehat{\nu}_n(\theta)\bigr\rrvert ^2 \,\d
\theta<\infty.
\end{equation}

By \eqref{Eq:mu1:1} and Fubini's theorem,
%
\begin{eqnarray}
\label{Eq:nu12} %
&&\E\int_{-\infty}^\infty\bigl
\llvert \widehat{\nu}_n(\theta)\bigr\rrvert ^2\, \d\theta\nonumber\\
&&\qquad =
\int_{-\infty}^\infty\d\theta\E\int_{\R^2}
\mu_n(\d s \,\d x) \mu_n(\d t \,\d y) \ee^{i \theta(W(s) - W(t))}\nonumber
\\[-8pt]\\[-8pt]
&&\qquad = \int_{-\infty}^\infty\d\theta\int_{(E\times F)^2}
\sigma(\d s) \sigma(\d t) \,\d x \,\d y \int_{\R^2}\d\xi\,\d\eta\nonumber
\\
&&\qquad \quad {}\times\exp \biggl( -i(\xi x +\eta y)- \frac{\xi^2 + \eta^2} {2n} \biggr) \E \bigl(
\ee^{i [(\xi+ \theta) W(s) + (\eta- \theta) W(t)] } \bigr).\nonumber %
\end{eqnarray}
When $0 < s < t$, this last expectation can be written as
%
\begin{equation}
\label{Eq:nu13} \exp \biggl(- \frac{s} 2 ( \xi+\eta)^2 -
\frac{t-s} 2 (\eta- \theta)^2 \biggr).
\end{equation}
By plugging this into (\ref{Eq:nu12}), 
we can write the triple integral in $[\d\theta \,\d\xi \,\d\eta]$ of
\eqref{Eq:nu12} as
%
\begin{eqnarray}
\label{Eq:nu14} %
&&\int_{\R^2} \ee^{- i (\xi x + \eta y)} \exp
\biggl( - \frac{\xi^2 + \eta^2} {2n} -\frac{s} 2 ( \xi+\eta)^2
\biggr) \,\d \xi \,\d\eta\nonumber
\\
&&\quad {} \times\int_{-\infty}^\infty \exp \biggl(-
\frac{t-s} 2 (\eta- \theta)^2 \biggr)\, \d\theta
\\
&&\qquad  = p(x, y) \sqrt{\frac{2 \pi}{t-s}},\nonumber %
\end{eqnarray}
where $p(x, y)$ denotes the joint density function of a
bivariate normal distribution with mean vector $0$ and covariance
matrix $\Gamma^{-1}$,
where
%
\begin{equation}
\Gamma:= \lleft( %
\matrix{ s +n^{-1} & s
\cr
s & s
+n^{-1} } %
\rright).
\end{equation}
%
We plug \eqref{Eq:nu14} into \eqref{Eq:nu12}, replace $F$ by $\R^d$ to
integrate $[\d x \,\d y]$ in order to find that
%
\begin{equation}
\sup_{n\ge1} \E\int_{-\infty}^\infty\bigl
\llvert \widehat{\nu}_n(\theta)\bigr\rrvert ^2 \,\d\theta
\le\textnormal{const}\cdot\int\int\frac{ \sigma(\d s) \sigma(\d
t)}{|s-t|^{1/2}} < \infty.
\end{equation}
This yields (\ref{Eq:nu11}) and completes the proof of
Proposition~\ref{pr:positiveLeb}.

\section{Proof of Theorem \texorpdfstring{\protect\ref{th:dimh}}{1.3}}
\label{Sec:Thm-main}

Here and throughout,
%
\begin{equation}
\B_x(\epsilon):= \bigl\{y\in\R^d\dvtx  \|x-y\|\le\epsilon
\bigr\}
\end{equation}
denotes the radius-$\epsilon$ ball about $x\in\R^d$. Also, define
$\nu_d$ to be the volume of $\B_0(1)$; that is,
%
\begin{equation}
\nu_d:= \frac{2\cdot\pi^{d/2}}{d\Gamma(d/2)}.
\end{equation}

Recall that $\{W(t)\}_{t\ge0}$ denotes a standard Brownian motion in
$\R^d$,
and consider the following \textup{``}parabolic Green function\textup{''}:
For all $t>0$ and $x\in\R^d$,
%
\begin{equation}
\label{eq:heat:kernel} p_t(x):= \frac{\ee^{ -\|x\|^2 / (2t)}}{(2\pi t)^{d/2}}\1_{(0,\infty)}(t).
\end{equation}
The seemingly-innocuous indicator function plays an important role in
the sequel;
this form of the heat kernel appears earlier in
Watson \cite{Watson,Watson:TC} and Doob \cite{Doob}, (4.1), page 266.

As indicated in the \hyperref[sec1]{Introduction}, our proof of Theorem~\ref{th:dimh}
is based
on the codimension argument to check whether or not $W(E)\cap F$
intersect a sufficiently-thin \textup{``}testing\textup{''} random set. One example of such
testing sets could be the range of a stable L\'evy process $X= \{X(t)\}
_{t\ge0}$
in $\R^d$ with index $\alpha\in(0, 2]$. However, this choice of
testing set will only work
for $d \le3$, because the range $X ((0, \infty) )$ will not
be able to intersect $W(E) \cap F$
if $d \ge4$ due to the fact that $X  ((0, \infty) )\cap G
=\varnothing$ a.s. for any
Borel set $G \subset\R^d$ with $\dim_{_{\mathrm{H}}}G < d -\alpha$.

To avoid this restriction and for future applications, we will use
the range of an $N$-parameter additive stable L\'evy process
with index $\alpha$ as the testing set for proving Theorem~\ref{th:dimh}.

Let $X^{(1)},\ldots,X^{(N)}$ be $N$ isotropic stable processes
with common stability index $\alpha\in(0,2]$. We assume that
the $X^{(j)}$'s are totally independent from one another, as well as from
the process $W$, and all take their values in $\R^d$. We
assume also that $X^{(1)},\ldots,X^{(N)}$ have right-continuous
sample paths with left-limits. This assumption can be---and will
be---made without incurring
any real loss in generality. Finally,\vspace*{1pt} our normalization of the processes
$X^{(1)},\ldots,X^{(N)}$ is described as follows:
%
\begin{equation}
\qquad \E \bigl[ \exp \bigl(i \bigl\langle\xi, X^{(k)}(1)\bigr\rangle \bigr)
\bigr] = \ee^{- \|\xi\|^\alpha/2} \qquad \mbox{for all } 1\le k \le N\mbox{ and }\xi\in
\R^d.
\end{equation}

Define the corresponding \emph{additive stable process}
$\X_\alpha:=\{\X_\alpha(\tt)\}_{\tt\in\R^N_+}$ as
%
\begin{equation}
\label{def:X} \X_\alpha(\tt):= \sum_{k=1}^N
X^{(k)} (t_k) \qquad \mbox{for all } \tt:=(t_1,
\ldots,t_N)\in\R^N_+.
\end{equation}
Also, define $\mathcal{C}_\gamma$ to be the capacity corresponding
to the energy form (\ref{eq:I:gamma}). That is, for all compact sets $U
\subset\R_+\times\R^d$ and $\gamma\ge0$,
%
\begin{equation}
\label{CAP:g} \mathcal{C}_\gamma(U):= \Bigl[ \inf_{\mu\in\mathcal{P}_d(U)}
\mathcal{E}_\gamma(\mu) \Bigr]^{-1}.
\end{equation}
%

%
\begin{theorem}\label{th:main2}
If $d> \alpha N$ and $F \subset\R^d$ has Lebesgue measure 0,
then
%
\begin{equation}
\label{Eq:Hitting0} \P \bigl\{ W(E) \cap\X_\alpha\bigl(\R^N_+
\bigr) \cap F \neq\O \bigr\} >0 \quad \Longleftrightarrow\quad  \mathcal{C}_{d-\alpha N} (E
\times F)>0.
\end{equation}
%
\end{theorem}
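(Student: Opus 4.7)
The plan is to use the codimension idea from the Introduction: $W(E)\cap F$ is thick enough to intersect $\overline{\X_\alpha(\R^N_+)}$ with positive probability if and only if $\mathcal{C}_{d-\alpha N}(E\times F)>0$. I would prove the two implications separately via second-moment arguments, extending the construction of Section \ref{Sec:posleb}.

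For the sufficiency direction, fix $\mu\in\mathcal{P}_d(E\times F)$ with $\mathcal{E}_{d-\alpha N}(\mu)<\infty$. For small $\e>0$ and large $T>0$, introduce the ``$\e$-near triple intersection count''
\begin{equation*}
	Y_{\e,T} := \int_{E\times F}\mu(\d s\,\d x)\int_{[0,T]^N}\frac{\1_{\{\|W(s)-x\|\le\e\}}\,\1_{\{\|\X_\alpha(\tt)-x\|\le\e\}}}{(\nu_d\e^d)^2}\,\d\tt.
\end{equation*}
Using the Gaussian density of $W(s)$ and the Green-kernel identity $\int_{\R^N_+}(\text{density of }\X_\alpha(\tt)\text{ at }x)\,\d\tt=c\|x\|^{\alpha N-d}$ (valid by scaling when $d>\alpha N$), one obtains $\E[Y_{\e,T}]\ge c_1>0$ for $T$ large and $\e$ small. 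By independence of $W$ and $\X_\alpha$, the second moment $\E[Y_{\e,T}^2]$ factors: the Brownian factor yields the heat kernel $p_{|t-s|}(x-y)$ via the joint density of $(W(s),W(t))$ (exactly as in \eqref{Eq:nu3}), and the stable factor yields the Riesz kernel $\|x-y\|^{-(d-\alpha N)}$ after integration over $\tt,\ss\in\R^N_+$. Combining, $\E[Y_{\e,T}^2]\le C\,\mathcal{E}_{d-\alpha N}(\mu)$, so Paley--Zygmund gives $\P\{Y_{\e,T}>0\}\ge c>0$ uniformly in $\e$, and a compactness argument as $\e\to 0$ produces a genuine point in $W(E)\cap\overline{\X_\alpha(\R^N_+)}\cap F$.

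For the necessity direction I would argue contrapositively: if $\mathcal{C}_{d-\alpha N}(E\times F)=0$, suppose the triple intersection is nonempty with positive probability and derive a contradiction. Fix $\e>0$ and a probability measure $\sigma$ on $E\times F$, and define a random measure $\tilde\mu_\e$ on $E\times F$ by weighting $\sigma$ with the integrand of $Y_{\e,T}$. The deterministic normalized expectation $\mu_\e := \E[\tilde\mu_\e]/\E\|\tilde\mu_\e\|$ is a probability measure on $E\times F$; moreover $\mu_\e\in\mathcal{P}_d(E\times F)$ since the Gaussian smoothing by $W$, integrated against $|F|=0$, kills any atomic mass at fixed times. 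Running the previous second-moment computation as a direct estimate on $\mathcal{E}_{d-\alpha N}(\mu_\e)$, one obtains $\mathcal{E}_{d-\alpha N}(\mu_\e)\le C\,\E[Y_{\e,T}^2]/\E[Y_{\e,T}]^2$, which stays bounded in $\e$ provided $\P\{Y_{\e,T}>0\}$ does not vanish. A weak-subsequential limit $\mu$ of $\{\mu_\e\}$ as $\e\to 0$ then lies in $\mathcal{P}_d(E\times F)$ and satisfies $\mathcal{E}_{d-\alpha N}(\mu)<\infty$, contradicting $\mathcal{C}_{d-\alpha N}(E\times F)=0$.

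The main obstacle is controlling $\P\{\|\X_\alpha(\tt)-x\|\le\e,\,\|\X_\alpha(\ss)-y\|\le\e\}$ in the second-moment bound. The joint density of $(\X_\alpha(\tt),\X_\alpha(\ss))$ does not factor cleanly, because for each coordinate $k$ the summands $X^{(k)}(t_k)$ and $X^{(k)}(s_k)$ share a common part. The remedy is an ``orthant decomposition'' of $(\tt,\ss)\in\R^{2N}_+$ by the signs of $t_k-s_k$, combined with the independence and scaling of the $X^{(k)}$'s; after integration over $\d\tt\,\d\ss$ this produces exactly the Riesz kernel $\|x-y\|^{-(d-\alpha N)}$, which multiplies the Brownian heat kernel $p_{|t-s|}(x-y)$ to yield the integrand of $\mathcal{E}_{d-\alpha N}(\mu)$. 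Achieving this identification precisely, and making the error terms vanish uniformly enough for the weak-compactness limit in the necessity direction, is the technical heart of the theorem.
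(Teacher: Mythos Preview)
Your sufficiency argument is essentially the paper's own approach: define an occupation-type functional $Z_\e(\mu)$, bound its first moment below and its second moment above by a constant times $\mathcal{E}_{d-\alpha N}(\mu)$, and apply Paley--Zygmund. The orthant decomposition you describe for the $\X_\alpha$-factor is exactly how the paper obtains the Riesz kernel (via the coordinatewise Markov property and the resolvent estimate of Proposition~\ref{pr:resolvent}). One technical point you gloss over is that after the Markov-property step the mollifiers $\phi_\e$ reappear at dilated scales ($8\e$ rather than $\e$); removing them without losing constants requires an isoperimetric inequality for positive-definite kernels (Proposition~\ref{pr:isoper}), which is worth flagging but not a conceptual gap.

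Your necessity argument, however, does not work. You begin by fixing an \emph{arbitrary} probability measure $\sigma$ on $E\times F$ and then form $\mu_\e=\E[\tilde\mu_\e]/\E\|\tilde\mu_\e\|$. But the density of $\tilde\mu_\e$ with respect to $\sigma$ is, after taking expectations, just $(s,x)\mapsto \E[\text{integrand}]$, which is bounded above and below by positive constants on $E\times F$ (this is precisely the first-moment estimate). Hence $\mu_\e$ is mutually absolutely continuous with $\sigma$ with bounded Radon--Nikodym derivative, so $\mathcal{E}_{d-\alpha N}(\mu_\e)\asymp\mathcal{E}_{d-\alpha N}(\sigma)$, and you have gained nothing. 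The inequality you assert, $\mathcal{E}_{d-\alpha N}(\mu_\e)\le C\,\E[Y_{\e,T}^2]/\E[Y_{\e,T}]^2$, goes in the wrong direction: the second-moment computation bounds $\E[Y_{\e,T}^2]$ \emph{above} by $\mathcal{E}_{d-\alpha N}(\sigma)$, not the reverse. Nowhere does the hypothesis $\P\{W(E)\cap\overline{\X_\alpha(\R^N_+)}\cap F\neq\O\}>0$ enter your construction of $\mu_\e$.

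The paper's necessity argument builds $\mu$ directly from the random intersection: one lets $S$ be the first time in $E$ at which $W$ hits $\X_\alpha([1,\nicefrac32]^N)\cap F$, chooses $\bm{U}$ so that $\X_\alpha(\bm{U})=W(S)$, and defines $\mu$ to be the conditional law of $(S,W(S))$ given that such a hit occurs. That $\mu\in\mathcal{P}_d(E\times F)$ uses $|F|=0$. The finiteness of $\mathcal{E}_{d-\alpha N}(\mu)$ is then obtained by comparing $\E[Z_\e(\mu)\mid\F_{s,\uu}]$ to an integral involving $\mu$ itself, taking a supremum over rational $(s,\uu)$, and invoking Cairoli's strong $(2,2)$ maximal inequality for the commuting $(N{+}1)$-parameter filtration generated by $W$ and $\X_\alpha$. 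This conditional/maximal-inequality machinery is the genuine content of the necessity half and has no analogue in your proposal.
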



We can now apply Theorem~\ref{th:main2}
to prove Theorem~\ref{th:dimh}. Theorem~\ref{th:main2}
will be established subsequently.

\begin{pf*}{Proof of Theorem~\ref{th:dimh}}
Suppose $\alpha\in(0,2]$ and $N\in\mathbf{Z}_+$ are chosen
such that $d - \alpha N \in(0, 2)$.
If $\X_\alpha$ denotes an $N$-parameter additive stable
process $\R^d$ whose index is $\alpha\in(0,2]$,
then \cite{KX05}, Theorem~4.4, implies that 
\begin{equation}
\label{eq:codim-ABM} \mbox{ codim } \X_\alpha\bigl(\R^N_+\bigr) =
d-\alpha N.
\end{equation}
This means that $\X_\alpha(\R^N_+)$ will intersect any nonrandom
Borel set $G\subset\R^d\backslash\{0\}$ with $\dim_{_{\mathrm{H}}}(G)>d-\alpha N$,
with positive probability; whereas $\X_\alpha(\R^N_+)$ does
not intersect any $G \subset\R^d\backslash\{0\}$ with
$\dim_{_{\mathrm{H}}}(G)<d-\alpha N$, almost surely.

Define
%
\begin{equation}
\Delta:= \sup \Bigl\{ \gamma>0\dvtx  \inf_{\mu\in\mathcal
{P}_d(E\times F)}
\mathcal{E}_\gamma(\mu)<\infty \Bigr\}
\end{equation}
with the convention that $\sup\varnothing= 0$.

If $\Delta> 0$ and $d-\alpha N<\Delta$, then
$\mathcal{C}_{d-\alpha N}(E\times F)>0$. It follows from
Theorem~\ref{th:main2} and \eqref{eq:codim-ABM} that
%
\begin{equation}
\label{eq:1} \P \bigl\{ \dim_{_{\mathrm{H}}} \bigl( W(E)\cap F \bigr) \ge d-
\alpha N \bigr\}>0.
\end{equation}
Because $d-\alpha N\in(0,\Delta)$ is arbitrary, we have
$\| \dim_{_{\mathrm{H}}}(W(E)\cap F)\|_{L^\infty(\P)} \ge\Delta$.

Similarly, Theorem~\ref{th:main2} and (\ref{eq:codim-ABM}) imply that
%
\begin{equation}
\label{eq:2}\hspace*{24pt} d-\alpha N > \Delta\quad \Longrightarrow\quad \dim_{_{\mathrm{H}}}  \bigl( W(E)\cap
F \bigr) \le d-\alpha N \qquad \mbox{almost surely.}
\end{equation}
Hence, $\|\dim_{_{\mathrm{H}}}(W(E)\cap F)\|_{L^\infty(\P)}\le\Delta$
whenever $\Delta\ge0$.
This proves the theorem.
\end{pf*}

\section{Proof of Theorem \texorpdfstring{\protect\ref{th:main2}}{3.1}}
\label{Sec:Thm-main2}
Our proof of Theorem~\ref{th:main2} is divided into separate parts.
We begin by developing a requisite result in harmonic analysis.
Then we develop some facts about additive L\'evy processes.
After that, we prove Theorem~\ref{th:main2} in two separate parts.

\subsection{Isoperimetry}

Recall that a function $\kappa\dvtx \R^n\to\bar\R_+:= [0, \infty]$ is
\emph{tempered}
if it is measurable and
%
\begin{equation}
\int_{\R^n} \frac{\kappa(x)}{(1+\|x\|)^m} \,\d x<\infty \qquad \mbox{for some $m
\ge0$}.
\end{equation}
A function $\kappa\dvtx \R^n\to\bar\R_+$ is said to be
\emph{positive definite} if it is tempered and
for all rapidly-decreasing test functions
$\phi\dvtx \R^n\to\R$,
%
\begin{equation}
\int_{\R^n}\d x\int_{\R^n}\d y \phi(x)
\kappa(x-y)\phi(y) \ge0.
\end{equation}

Let $\hat{g}$ denote the Fourier transform of a function
(or a measure) $g$. We use the following normalization:
$\hat{g}(\xi)=\int_{\R^n}\exp(i\xi\cdot z)g(z) \,\d z$ when $g\in
L^1(\R^n)$.
We will make heavy use of the following result.

\begin{lemma}\label{lem:FK}
If $\kappa\dvtx \R^n\to\bar\R_+$ is positive definite and lower semicontinuous,
then for all finite Borel measures $\mu$ on $\R^n$,
%
\begin{equation}
\label{eq:FK1} \int\int\kappa(x-y) \mu(\d x) \mu(\d y)= \frac{1}{(2\pi)^d}\int
_{\R^d}\hat\kappa(\xi)\bigl|\hat\mu(\xi)\bigr|^2 \,\d\xi.
\end{equation}
If $\kappa$ is in addition bounded, then in fact
for all finite Borel measures $\mu$ and $\nu$ on~$\R^n$,
%
\begin{equation}
\label{eq:FK2} \int\int\kappa(x-y) \mu(\d x) \nu(\d y)= \frac{1}{(2\pi)^d}\int
_{\R^d}\hat\kappa(\xi)\hat\mu(\xi )\overline{\widehat{\nu}(\xi)}\, \d\xi.
\end{equation}
\end{lemma}

\begin{pf}
Equation \eqref{eq:FK1} is proved in Foondun and Khoshnevisan
\cite{FK}, Corollary~3.4; for a weaker version see \cite{KX}, Theorem~5.2.
We can derive \eqref{eq:FK2} from \eqref{eq:FK1} in a standard way
(\textup{``}polarization\textup{''}): Apply \eqref{eq:FK1} with $\mu+\nu$ in place of
$\mu$ to see that
%
\begin{equation}
\label{eq:FK3} \hspace*{24pt}\int\int\kappa(x-y) (\mu+\nu) (\d x) (\mu+\nu) (\d y)=
\frac{1}{(2\pi)^d}\int_{\R^d}\hat\kappa(\xi)\bigl|(\hat\mu+\hat\nu )
(\xi)\bigr|^2 \,\d\xi.
\end{equation}
Develop both sides, and match the quadratic terms,
using \eqref{eq:FK1}, to finish.
\end{pf}

Lemma~\ref{lem:FK} implies two \textup{``}isoperimetric inequalities\textup{''}
that are stated below as Propositions \ref{pr:isoper} and \ref{pr:isoper:bis}.
Recall that a finite Borel measure $\nu$ on $\R^d$ is said
to be \emph{positive definite} if $\hat\nu(\xi)\ge0$ for
all $\xi\in\R^d$.

%
\begin{proposition}\label{pr:isoper}
Suppose $\kappa\dvtx \R^d\to\bar\R_+$ is a lower semicontinuous
positive-definite function such that $\kappa(x) = \infty$ iff $x=0$.
Suppose $\nu$ and $\sigma$ are two positive definite probability measures
on $\R^d$ that satisfy the following:
\begin{enumerate}[2.]
\item[1.]$\kappa$ and $\kappa*\nu$ are uniformly continuous
on every compact subset of
$\R^d\backslash\{0\}$; and
\item[2.]$(\tau,x)\mapsto(p_\tau*\sigma)(x)$ is uniformly continuous
on every compact subset of $(0, \infty)\times (\R^d \backslash
\{0\} )$.
\end{enumerate}
Then, for all finite Borel measures $\mu$ on $\R_+\times\R^d$,
%
\begin{eqnarray}
&&\int\int(p_{|t-s|}*\sigma) (x-y) (\kappa*\nu) (x-y) \mu(\d t
\,\d x) \mu (\d s \,\d y)\nonumber
\\[-8pt]\\[-8pt]
&&\qquad \le \int\int p_{|t-s|}(x-y) \kappa(x-y) \mu(\d t \,\d x) \mu(\d s \,\d y).\nonumber
\end{eqnarray}
\end{proposition}

\begin{remark}\label{rem:isoper}
The very same proof shows the following slight enhancement:
\textit{Suppose $\kappa$ and $\nu$ are the same as in Proposition~\ref{pr:isoper}.
If $\sigma_1$ and $\sigma_2$ share the properties of $\sigma$
in Proposition~\ref{pr:isoper} and $\hat\sigma_1(\xi)\le\hat
\sigma_2(\xi)$
for all $\xi\in\R^d$, then for all finite Borel measures $\mu$ on
$\R_+\times\R^d$},
%
\begin{eqnarray}
&&\int\int(p_{|t-s|}*\sigma_1) (x-y) (\kappa*\nu)
(x-y) \mu(\d t \,\d x) \mu(\d s \,\d y)\nonumber
\\[-8pt]\\[-8pt]
&&\qquad \le \int\int(p_{|t-s|}*\sigma_2) (x-y) \kappa(x-y) \mu(\d t
\,\d x) \mu(\d s \,\d y).\nonumber %
\end{eqnarray}
Proposition~\ref{pr:isoper} is this in the case that $\sigma
_2:=\delta_0$. An analogous
result holds for positive definite probability measures
$\nu_1$ and $\nu_2$ which satisfy $\hat\nu_1(\xi)\le\hat\nu
_2(\xi)$
for all $\xi\in\R^d$.
\end{remark}

\begin{pf}
Throughout this proof, we choose and fix $\epsilon>0$.

Without loss of generality, we may and will assume that
%
\begin{equation}
\label{eq:HA:wlog} \int\int p_{|t-s|}(x-y) \kappa(x-y) \mu(\d t\, \d x) \mu(\d s\,
\d y)<\infty;
\end{equation}
for there is nothing to prove, otherwise.

Because $p_{|t-s|}$ is positive definite for every nonnegative
$t\neq s$, so are $p_{|t-s|}*\sigma$ and $\kappa*\nu$. Because
$p_{|t-s|}$ is bounded and continuous when $s\neq t$,
it follows from the Bochner--Minlos--Schwartz theorem
that $p_{|t-s|}\times(\kappa*\nu)$ is positive
definite. Therefore, for fixed $t>s$,
Lemma~\ref{lem:FK} applies, and tells us that
for all Borel probability measures $\rho$ on $\R^d$, and for all nonnegative
$t\neq s$,
%
\begin{eqnarray}
\label{Eq:47} %
&&\int\int(p_{|t-s|}*\sigma) (x-y) (\kappa*\nu)
(x-y) \rho(\d x) \rho(\d y)\nonumber
\\[-8pt]\\[-8pt]
&&\qquad =\frac{1}{(2\pi)^d}\int_{\R^d}\d\xi\int_{\R^d}
\d\zeta\ee ^{-(t-s)\|\xi\|^2/2}\hat\sigma(\xi)\hat\kappa(\zeta) \hat\nu(\xi)\bigl|\hat\rho(
\xi-\zeta)\bigr|^2.\nonumber %
\end{eqnarray}
Because the preceding is valid also when $\sigma=\nu=\delta_0$,
and since $0\le\hat\sigma(\xi),\hat\nu(\xi)\le1$ for all
$\xi\in\R^d$, it follows that for all nonnegative $t\neq s$,
%
\begin{eqnarray}
&&\int\int(p_{|t-s|}*\sigma) (x-y) (\kappa*\nu) (x-y) \rho(\d
x) \rho(\d y)\nonumber
\\[-8pt]\\[-8pt]
&&\qquad  \le\int\int p_{|t-s|}(x-y)\kappa(x-y) \rho(\d x) \rho(\d y).\nonumber
\end{eqnarray}
This inequality continues to holds when $\rho$ is a finite Borel measure
on $\R^d$, by scaling. Thus, thanks to Tonelli's theorem,
the proposition is valid whenever
$\mu(\d t \,\d x)=\lambda(\d t)\rho(\d x)$ for two finite Borel measures
$\lambda$ and $\rho$, respectively defined on $\R_+$ and $\R^d$.

Now let us consider a compactly-supported finite measure $\mu$ on
$\R_+\times\R^d$. For all $\eta>0$, define
%
\begin{equation}
\label{eq:HAeta:0} \mathcal{G}(\eta):= \bigl\{ (t,s,x,y)\in(\R_+)^2
\times\bigl(\R ^d\bigr)^2\dvtx  |t-s|\wedge\|x-y\| \ge\eta
\bigr\}.
\end{equation}
It suffices to prove that for all $\eta>0$,
%
\begin{eqnarray}
\label{eq:HAeta} %
&&\int\int_{\mathcal{G}(\eta)} (p_{|t-s|}*
\sigma) (x-y) (\kappa*\nu) (x-y) \mu(\d t \,\d x) \mu(\d s \,\d y)
\nonumber\\[-8pt]\\[-8pt]
&&\qquad \le \int\int_{\mathcal{G}(\eta)} p_{|t-s|}(x-y)\kappa(x-y) \mu(\d t
\,\d x) \mu(\d s \,\d y). \nonumber%
\end{eqnarray}
This is so, because $\kappa(0) = \infty$ and \eqref{eq:HA:wlog} readily
tell us that the product measure $\mu\otimes\mu$ does not charge
%
\begin{equation}
\bigl\{ (t,s,x,y)\in(\R_+)^2\times\bigl(\R^d
\bigr)^2\dvtx  x=y \bigr\};
\end{equation}
and, therefore,
%
\begin{eqnarray}
&&\lim_{\eta\downarrow0} \int\int_{\mathcal{G}(\eta)}
(p_{|t-s|}*\sigma) (x-y) (\kappa*\nu) (x-y) \mu(\d t\, \d x) \mu(\d s\, \d y)
\nonumber\\
&&\qquad = \mathop{\int\int_{s\neq t}}_{\hphantom{\  \ s\neq t}x\neq y} (p_{|t-s|}*
\sigma) (x-y) (\kappa*\nu) (x-y) \mu(\d t\, \d x) \mu(\d s \,\d y)
\\
&&\qquad = \int\int(p_{|t-s|}*\sigma) (x-y) (\kappa*\nu) (x-y) \mu(\d t\, \d x)
\mu(\d s\, \d y).\nonumber %
\end{eqnarray}
And similarly,
%
\begin{eqnarray}
&&\lim_{\eta\downarrow0} \int\int_{\mathcal{G}(\eta)}p_{|t-s|}(x-y)
\kappa(x-y) \mu(\d t \,\d x) \mu(\d s\, \d y)\nonumber
\\[-8pt]\\[-8pt]
&&\qquad = \int\int p_{|t-s|}(x-y)\kappa(x-y) \mu(\d t\, \d x) \mu(\d s\, \d y).\nonumber
\end{eqnarray}
And the proposition follows, subject to \eqref{eq:HAeta}.

Next, we verify \eqref{eq:HAeta} to finish the proof.
One can check directly that
$\mathcal{G}(\eta)\cap\supp(\mu\otimes\mu)$ is compact,
and both mappings $(t,s,x,y)\mapsto(p_{|t-s|}*\sigma)(x-y)\times\allowbreak
(\kappa*\nu)(x-y)$
and $(t,s,x,y)\mapsto p_{|t-s|}(x-y)\kappa(x-y)$
are uniformly continuous on $\mathcal{G}(\eta)\cap\supp(\mu\otimes
\mu)$.

By discretization, we can find finite Borel measures $\{\lambda_j\}
_{j=1}^\infty$---on
$\R_+$---and $\{\rho_j\}_{j=1}^\infty$---on $\R^d$---such that
$\mu$ is the weak limit of $\mu_N:=\sum_{j=1}^N (\lambda_j\otimes
\rho_j)$
as $N\to\infty$. It follows from (\ref{eq:FK2}) and an argument
similar to (\ref{Eq:47})
that for all $\eta>0$ and $N\ge1$,
%
\begin{eqnarray}
&&\int\int_{\mathcal{G}(\eta)} (p_{|t-s|}*\sigma) (x-y) (
\kappa*\nu) (x-y) \mu_N(\d t \,\d x) \mu_N(\d s\, \d y)\nonumber
\\[-8pt]\\[-8pt]
&&\qquad \le \int\int_{\mathcal{G}(\eta)} p_{|t-s|}(x-y) \kappa(x-y)
\mu_N(\d t \,\d x) \mu_N(\d s \,\d y).\nonumber %
\end{eqnarray}
Let $N\uparrow\infty$ to deduce \eqref{eq:HAeta}, and hence
the proposition.
\end{pf}

%
\begin{proposition}\label{pr:isoper:bis}
Suppose $\kappa\dvtx \R\to\bar\R_+$ is a lower semicontinuous
positive-definite function such that $\kappa(x)=\infty$ iff $x=0$.
Suppose $\nu$ and
$\sigma$ are two positive definite probability measures,
respectively on $\R$ and $\R^d$, that satisfy the following:
\begin{enumerate}[2.]
\item[1.]$\kappa$ and $\kappa*\nu$ are uniformly continuous
on every compact subset of
$\R\backslash\{0\}$; and
\item[2.]$(\tau,x)\mapsto(p_\tau*\sigma)(x)$ is uniformly continuous
on every compact subset of $ (0, \infty) \times (\R^d \backslash
\{0\} )$.
\end{enumerate}
Then, for all finite Borel measures $\mu$ on $\R_+\times\R^d$,\vspace*{-1.5pt}
%
\begin{eqnarray}
&&\int\int(p_{|t-s|}*\sigma) (x-y) (\kappa*\nu) (s-t) \mu(\d t
\,\d x) \mu (\d s \,\d y)\nonumber
\\[-8pt]\\[-8pt]
&&\qquad \le \int\int p_{|t-s|}(x-y) \kappa(s-t) \mu(\d t \,\d x) \mu(\d s \,\d y).\nonumber
\end{eqnarray}
\end{proposition}

\begin{pf}
It suffices to prove the proposition in the case that
%
\begin{equation}
\label{eq:mu:lambda:rho} \mu(\d s\, \d x) = \lambda(\d s)\rho(\d x),
\end{equation}
for finite Borel measures $\lambda$ and $\rho$, respectively on
$\R_+$ and $\R^d$. See, for instance, the argument beginning with
\eqref{eq:HAeta:0} in the proof of Proposition~\ref{pr:isoper}.
We shall extend the definition $\lambda$ so that it is a finite Borel measure
on all of $\R$ in the usual way: If $A\subset\R$ is Borel
measurable, then
$\lambda(A):=\lambda(A\cap\R_+)$. This slight abuse in notation should
not cause any confusion in the sequel.

Tonelli's theorem and Lemma~\ref{lem:FK} together imply that
in the case that \eqref{eq:mu:lambda:rho} holds:\vspace*{-1.5pt}
%
\begin{eqnarray}
\label{BIS} &&\int\int(p_{|t-s|}*\sigma) (x-y) (\kappa*\nu) (s-t) \mu(\d
t \,\d x) \mu(\d s\, \d y)
\nonumber
\\
&&\qquad = \int\int\lambda(\d t) \lambda(\d s) (\kappa*\nu) (s-t) \int\int
\rho(\d x) \rho(\d y) (p_{|t-s|}*\sigma) (x-y)
\nonumber
\\[-8pt]\\[-8pt]
&&\qquad =\frac{1}{(2\pi)^d}\int_{\R^d} \hat\sigma(\xi)\bigl|\hat
\rho(\xi )\bigr|^2 \,\d\xi \int\int\lambda(\d t) \lambda(\d s) (\kappa*\nu)
(s-t) \ee^{-|t-s|\cdot\|\xi\|^2/2}
\nonumber
\\
&&\qquad \le\frac{1}{(2\pi)^d}\int_{\R^d} \bigl|\hat\rho(
\xi)\bigr|^2 \,\d\xi \int\int\lambda(\d t) \lambda(\d s) (\kappa*\nu) (s-t)
\ee^{-|t-s|\cdot\|\xi\|^2/2}.\nonumber
\end{eqnarray}
The map $\tau\mapsto\exp\{-|\tau|\cdot\|\xi\|^2/2\}$ is
positive definite on $\R$ for every fixed $\xi\in\R^d$;
in fact, its inverse Fourier transform
is a (scaled) Cauchy density function, which we refer to as
$\vartheta_\xi$.
Therefore, in accord with Lemma~\ref{lem:FK},\vspace*{-1.5pt}
%
\begin{eqnarray}
\label{Eq:BIS2} %
&&\int\int(\kappa*\nu) (s-t) \ee^{-|t-s|\cdot\|\xi\|^2/2}
\lambda(\d t) \lambda(\d s)\nonumber
\\
&&\qquad = \frac{1}{2\pi}\int_{\R}\bigl|\hat\lambda(
\tau)\bigr|^2 (\hat\kappa\hat\nu*\vartheta_\xi) (\tau) \,\d\tau
\le\frac{1}{2\pi}\int_{\R}\bigl|\hat\lambda(
\tau)\bigr|^2 (\hat\kappa*\vartheta_\xi) (\tau) \,\d\tau
\\
&&\qquad =\int\int\kappa(s-t) \ee^{-|t-s|\cdot\|\xi\|^2/2} \lambda(\d t) \lambda(\d s).\nonumber
\end{eqnarray}
The last line follows from the first identity, since we can consider
$\nu=\delta_0$ as
a possibility. Therefore, it follows from \eqref{BIS} and \eqref
{Eq:BIS2} that\vspace*{-1.5pt}
\begin{eqnarray*}
&&\int\int(p_{|t-s|}*\sigma) (x-y) (\kappa*\nu) (s-t) \mu(\d t \,\d x) \mu (
\d s \,\d y)
\\
&&\qquad \le\frac{1}{(2\pi)^d}\int_{\R^d} \bigl|\hat\rho(
\xi)\bigr|^2 \,\d\xi \int\int\lambda(\d t) \lambda(\d s) \kappa(s-t)
\ee^{-|t-s|\cdot\|
\xi\|^2/2}
\\
&&\qquad =\int\int\lambda(\d t) \lambda(\d s) \kappa(s-t) \int\int\rho(\d x) \rho(\d y)
p_{|t-s|}(x-y);
\end{eqnarray*}
the last line follows from the first identity in \eqref{BIS} by considering
the special case that $\nu=\delta_0$ and $\sigma=\delta_0$.
This proves the proposition
in the case that $\mu$ has the form \eqref{eq:mu:lambda:rho}, and the result
follows.
\end{pf}

\subsection{Additive stable processes}
In this subsection, we develop a \textup{``}resolvent density\textup{''} estimate
for the additive stable process $\X_\alpha$.

First of all, note that the characteristic function
$\xi\mapsto\E\exp(i \langle\xi, \X_\alpha(\tt) \rangle)$
of $\X_\alpha(\tt)$ is absolutely integrable
for every $\tt\in\R^N_+\setminus\{\mathbf{0}\}$. Consequently,
the inversion formula applies and tells us that we can always choose
the following as the probability density function of $\X_\alpha(\tt)$:
%
\begin{equation}
\label{eq:g:FT} g_{\tt}(x):= g_{\tt}(\alpha;x) =
\frac{1}{(2\pi)^d}\int_{\R^d}\ee^{-i \langle x, \xi\rangle-
|\tt| \cdot\|\xi\|^\alpha/2} \,\d\xi.
\end{equation}

\begin{lemma}\label{lem:resolvent}
Choose and fix some $\mathbf{a},\mathbf{b}\in(0,\infty)^N$
such that $a_j\le b_j$ for all $1\le j\le N$.
Define
%
\begin{equation}
[\mathbf{a},\mathbf{b}]:= \bigl\{ \mathbf{s}\in\R^N_+\dvtx  a_j\le
s_j\le b_j \mbox{ for all $1\le j\le N$} \bigr\}.
\end{equation}
Then, for all $M>0$
there exists a constant $A_0\in(1,\infty)$---depending only on
the parameters $d$, $N$, $M$, $\alpha$, $\min_{1\le j\le N}a_j$,
and $\max_{1\le j\le N}b_j$---such that for all $x\in[-M,M]^d$,
%
\begin{equation}
\label{Eq:426} A_0^{-1} \le\int_{[\mathbf{a},\mathbf{b}]}
g_{\tt}(x) \,\d\tt\le A_0.
\end{equation}
\end{lemma}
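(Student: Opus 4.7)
\medskip
\noindent\textbf{Proof proposal.}
The plan is to reduce the integral to a uniform compact-set estimate on a single, fixed isotropic $\alpha$-stable density via the stable scaling.

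First I would use the Fourier representation \eqref{eq:g:FT} together with the change of variables $\zeta=|\tt|^{1/\alpha}\xi$ to obtain the scaling identity
\begin{equation*}
   g_{\tt}(x)=|\tt|^{-d/\alpha}\,g_1\!\left(\frac{x}{|\tt|^{1/\alpha}}\right),
   \qquad
   g_1(y):=\frac{1}{(2\pi)^d}\int_{\R^d}\ee^{-i\<y,\xi\>-\|\xi\|^\alpha/2}\,\d\xi,
\end{equation*}
where $g_1$ is the standard isotropic $\alpha$-stable density at time one.

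Next I would observe that on $[\bm{a},\bm{b}]$ the quantity $|\tt|=\sum_{j=1}^N t_j$ lies in the positive interval $\big[\sum_j a_j,\sum_j b_j\big]$, so $|\tt|^{-d/\alpha}$ is squeezed between two explicit positive constants that depend only on $d$, $N$, $\alpha$, $\min_j a_j$ and $\max_j b_j$. Moreover, for $x\in[-M,M]^d$ one has $\|x/|\tt|^{1/\alpha}\|\le M\sqrt d\cdot(\min_j a_j)^{-1/\alpha}$, so the argument of $g_1$ stays inside one fixed closed ball $K\subset\R^d$ whose radius is controlled by the allowed parameters.

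I would then verify two standard facts about $g_1$: it is continuous on $\R^d$ (immediate from dominated convergence in its Fourier representation, since $\ee^{-\|\xi\|^\alpha/2}$ is integrable) and it is strictly positive everywhere. For $\alpha=2$, $g_1$ is simply a Gaussian density. For $\alpha\in(0,2)$, I would invoke the subordination representation of the isotropic $\alpha$-stable law as a Brownian motion time-changed by an independent positive $\alpha/2$-stable subordinator; this writes $g_1(y)$ as an average of strictly positive Gaussian densities against a positive law, and hence $g_1>0$ pointwise. Consequently $g_1$ attains both a strictly positive minimum and a finite maximum on the compact ball $K$.

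Combining these ingredients squeezes $g_{\tt}(x)$ between two positive constants uniformly in $(\tt,x)\in[\bm{a},\bm{b}]\times[-M,M]^d$. Integrating in $\tt$ over $[\bm{a},\bm{b}]$---a set of finite Lebesgue measure again controlled by the stated parameters---then delivers \eqref{Eq:426}. The only mildly subtle point is the pointwise strict positivity of $g_1$; once that is granted via the subordination argument (equivalently, the full-support property of isotropic stable distributions), everything else is a standard compactness estimate.
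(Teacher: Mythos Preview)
Your proof is correct and follows the same route as the paper: use the stable scaling $g_{\tt}(x)=|\tt|^{-d/\alpha}g_1(x/|\tt|^{1/\alpha})$, trap $|\tt|$ in a compact subinterval of $(0,\infty)$ and the rescaled argument in a fixed ball, and then invoke continuity plus strict positivity of the isotropic stable density on that ball. The paper additionally records the tail asymptotic $g_{\vec 1}(z)\asymp\|z\|^{-d-\alpha}$ for $\|z\|\ge R$, but that estimate is stated there for use in the next proposition and is not actually needed for the present lemma.
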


\begin{pf}
Let $\vec{1}:=(1,\ldots,1)\in\R^N$.
Then we may also observe the scaling relation,
%
\begin{equation}
\label{eq:scaling} g_{\tt}(x) = |\tt|^{-d/\alpha}g_{\vec{1}} \biggl(
\frac{x}{|\tt
|^{1/\alpha}} \biggr),
\end{equation}
together with the fact $g_{\vec{1}}$ is an isotropic stable-$\alpha$
density function
on $\R^d$. The upper bound in (\ref{Eq:426}) follows from (\ref{eq:scaling})
and the boundedness of $g_{\vec{1}}(z)$.

On the other hand, since $\mathbf{a}\in(0,\infty)^N$, the lower bound
in (\ref{Eq:426})
follows from (\ref{eq:scaling}) and the well-known fact that $g_{\vec
{1}}(z)$ is
continuous and strictly positive everywhere.
\end{pf}

%
\begin{proposition}\label{pr:resolvent}
Choose and fix some $\mathbf{b}\in(0,\infty)^N$
and define $[\mathbf{0},\mathbf{b}]$ as in Lemma~\ref{lem:resolvent}, and
assume $d > \alpha N$.
Then, for all $M>0$
there exists a constant $A_1\in(1,\infty)$---depending only on
$d$, $N$, $M$, $\alpha$, $\min_{1\le j\le N}b_j$,
and $\max_{1\le j\le N}b_j$---such that for all $x\in[-M,M]^d$,
%
\begin{equation}
\label{Eq:430} \frac{1}{A_1\|x\|^{d-\alpha N}} \le\int_{[\mathbf{0},\mathbf{b}]}
g_{\tt}(x)\, \d\tt\le \frac{A_1}{\|x\|^{d-\alpha N}}.
\end{equation}
\end{proposition}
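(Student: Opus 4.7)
The plan is to reduce the $N$-dimensional integral to a one-dimensional radial integral in $r := |\tt|$, and then to extract the factor $\|x\|^{-(d-\alpha N)}$ by a scaling change of variables. Since $g_\tt(x)$ depends on $\tt$ only through $|\tt|$ by \eqref{eq:scaling}, I would first sandwich the box $[\bm{0}\,,\bm{b}]$ between the two $\ell^1$-simplices $\Delta_R := \{\tt\in\R^N_+ :\ |\tt|\le R\}$, using the inclusions $\Delta_{\beta_0} \subseteq [\bm{0}\,,\bm{b}] \subseteq \Delta_{\beta_1}$ where $\beta_0 := \min_{1\le j\le N} b_j$ and $\beta_1 := \sum_{j=1}^N b_j$. (The first inclusion holds because $t_j \le \sum_k t_k \le \beta_0 \le b_j$.) It then suffices to show that for every $R\in[\beta_0\,,\beta_1]$,
\begin{equation*}
	\int_{\Delta_R} g_\tt(x)\,\d\tt \ \asymp\ \|x\|^{-(d-\alpha N)},
\end{equation*}
with implicit constants depending only on the allowed parameters.

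Using $\mathrm{vol}(\Delta_r) = r^N/N!$ to integrate radially, the left-hand side equals $\tfrac{1}{(N-1)!}\int_0^R r^{N-1-d/\alpha}\, g_{\vec{1}}(x/r^{1/\alpha})\,\d r$. I would then apply the substitutions $u := r^{1/\alpha}$ followed by $s := \|x\|/u$, using the isotropy of $g_{\vec{1}}$ to write $g_{\vec{1}}(x/u) = \psi(\|x\|/u)$, where $\psi(s) := g_{\vec{1}}(se)$ for any unit vector $e\in\R^d$. A routine bookkeeping then produces
\begin{equation*}
	\int_{\Delta_R} g_\tt(x)\,\d\tt \ =\ \frac{\alpha}{(N-1)!}\,\|x\|^{-(d-\alpha N)}\, J(R\,,x), \qquad J(R\,,x) := \int_{\|x\|/R^{1/\alpha}}^\infty s^{d-\alpha N - 1}\,\psi(s)\,\d s,
\end{equation*}
so the problem collapses to bounding $J(R\,,x)$ above and below by positive constants, uniformly in $x \in [-M\,,M]^d$ and $R \in [\beta_0\,,\beta_1]$.

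For the upper bound, $J(R\,,x) \le \int_0^\infty s^{d-\alpha N - 1}\psi(s)\,\d s$, and this integral converges: the hypothesis $d > \alpha N$ handles integrability near zero (with $\psi$ bounded there), while the tail estimate $\psi(s)\le C/s^{d+\alpha}$ from \eqref{eq:g:bounds} handles integrability at infinity. For the lower bound, the lower limit satisfies $\|x\|/R^{1/\alpha} \le M\sqrt{d}/\beta_0^{1/\alpha} =: S_0$, so $J(R\,,x) \ge \int_{S_0}^\infty s^{d-\alpha N-1}\psi(s)\,\d s$, which is a strictly positive constant by the matching lower bound in \eqref{eq:g:bounds}.

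The argument is ultimately a scaling calculation, so no new idea is required beyond Lemma \ref{lem:resolvent}. The only substantive input is the hypothesis $d > \alpha N$, which is precisely what is needed to integrate the singularity of $g_\tt(x)$ near $\tt=\bm{0}$, and without which the estimate necessarily fails; the main bookkeeping obstacle is merely to carry out the two substitutions cleanly and to verify that all implicit constants depend only on $d$, $N$, $\alpha$, $M$, $\beta_0$, and $\beta_1$.
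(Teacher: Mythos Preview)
Your argument is correct, and the bookkeeping in the two substitutions checks out: the simplex sandwich $\Delta_{\beta_0}\subseteq[\bm0,\bm b]\subseteq\Delta_{\beta_1}$ is valid, the radial reduction to $\tfrac{1}{(N-1)!}\int_0^R r^{N-1-d/\alpha}g_{\vec1}(x/r^{1/\alpha})\,\d r$ is right, and the resulting $J(R,x)$ is bounded above by the full integral $\int_0^\infty s^{d-\alpha N-1}\psi(s)\,\d s$ (finite near $0$ by $d>\alpha N$ and boundedness of $\psi$, finite near $\infty$ by \eqref{eq:g:bounds}) and below by $\int_{S_0}^\infty$, with $S_0=M\sqrt d/\beta_0^{1/\alpha}$ depending only on the allowed parameters.

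Your route differs from the paper's. For the upper bound the paper does not compute at all: it dominates $\int_{[\bm0,\bm b]}g_\tt(x)\,\d\tt$ by $\ee^{|\bm b|}\int_{\R^N_+}\ee^{-|\tt|}g_\tt(x)\,\d\tt$ and then cites a known resolvent estimate \cite[Proposition~4.1.1]{Kh}. For the lower bound the paper works directly on the box $[\bm0,\bm b]$, invokes the tail lower bound in \eqref{eq:g:bounds} only on the region $\{|\tt|^{1/\alpha}\le\|x\|\}$, and estimates $\int_{|\tt|\le\|x\|^\alpha}|\tt|\,\d\tt\gtrsim\|x\|^{\alpha(N+1)}$. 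Your approach is more symmetric and fully self-contained: the same scaling substitution handles both bounds, and you never need the external reference. The paper's approach, in exchange, avoids the simplex/radial reduction and the change of variables entirely on the upper-bound side. One small remark: you do not actually use Lemma~\ref{lem:resolvent} anywhere; your inputs are just \eqref{eq:scaling}, \eqref{eq:g:bounds}, and the boundedness of $g_{\vec1}$.
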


\begin{pf} Recall the following standard estimate: For all
$R>0$, there exists $C(R)\in(1,\infty)$ and $c(R)\in(0,1)$ such that
%
\begin{equation}
\label{eq:g:bounds} \frac{c(R)}{\|z\|^{d+\alpha}} \le g_{\vec{1}}(z) \le\frac{C(R)}{\|z\|^{d+\alpha}}
\qquad \mbox{for all $z\in\R^d$ with $\|z\|\ge R$}.
\end{equation}
See \cite{Kh}, Proposition~3.3.1, page 380, where this is proved for
$R=2$. The
slightly more general case where $R>0$ is arbitrary is proved in
exactly the same
manner.

Since
%
\begin{equation}
\int_{[\mathbf{0},\mathbf{b}]} g_{\tt}(x)\, \d\tt\le \ee^{|\mathbf{b}|}
\int_{\R^N_+}\ee^{-|\tt|} g_{\tt}(x)\, \d\tt,
\end{equation}
the proof of Proposition~4.1.1 of \cite{Kh}, page 420, shows that the
upper bound
in (\ref{Eq:430}) holds for all $x \in\R^d$.

For the lower bound, we first recall the notation
$\vec{1}:=(1,\ldots,1)\in\R^N$, and then apply \eqref{eq:scaling}
and \eqref{eq:g:bounds} in order to find that
%
\begin{eqnarray}
\label{Eq:432} \int_{[\mathbf{0},\mathbf{b}]} g_{\tt}(x) \,\d
\tt &=&\int_{[\mathbf{0},\mathbf{b}]} |\tt|^{-d/\alpha} g_{\vec{1}} \biggl(
\frac{x}{|\tt|^{1/\alpha}} \biggr)\, \d\tt\nonumber
\\[-8pt]\\[-8pt]
&\ge&\frac{c(1)}{\|x\|^{d+\alpha}} \cdot\hspace*{-8pt}\mathop{\int_{\tt\in[\mathbf{0},\mathbf{b}]:}}_{\hphantom{|\tt|^{1{/}}}|\tt|^{1/\alpha}\le\|x\|}
|\tt| \,\d\tt.\nonumber %
\end{eqnarray}
Clearly, there exists $R_0>0$ sufficiently small such that
whenever $\|x\|\le R_0$,
%
\begin{equation}
\mathop{\int_{\tt\in[\mathbf{0},\mathbf{b}]:}}_{\hphantom{|\tt|^{1{/}}}|\tt|^{1/\alpha}\le\|x\|} |\tt| \,\d\tt\ge\mbox{const}\cdot \|x\|^{\alpha(N+1)},
\end{equation}
and the result follows. On the other hand, if $\|x\|>R_0$,
then the preceding display still holds uniformly for all $x\in[-M,M]^d$.
This proves the proposition.
\end{pf}

We mention also the following; it is an immediate consequence of
Proposition~\ref{pr:resolvent} and the scaling relation \eqref{eq:scaling}.

\begin{lemma}\label{lem:resolvent:bis}
Choose and fix some $\mathbf{b}\in(0,\infty)^N$
and define $[\mathbf{0},\mathbf{b}]$ as in Lemma~\ref{lem:resolvent}.
Then 
there exists a constant $A_2\in(1,\infty)$---depending only on
$d$, $N$, $\alpha$, $\min_{1\le j\le N}b_j$,
and $\max_{1\le j\le N}b_j$---such that for all $x\in\R^d$,
%
\begin{equation}
\label{Eq:compare} \int_{[\mathbf{0},2\mathbf{b}]} g_{\tt}(x) \,\d\tt\le
A_2\int_{[\mathbf{0},\mathbf{b}]} g_{\tt}(x) \,\d\tt.
\end{equation}
\end{lemma}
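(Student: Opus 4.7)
The strategy is a case analysis on the magnitude of $x$, with threshold $M := (2|\bm{b}|)^{1/\alpha}$ depending only on $\alpha$, $N$, and $\max_j b_j$. In the ``near'' regime $x \in [-M,M]^d$, I will invoke Proposition \ref{pr:resolvent} twice---once with the box $[\bm{0},\bm{b}]$ and once with $[\bm{0},2\bm{b}]$, both of which are admissible choices of the data. Both integrals then obey $\int g_\tt(x)\,\d\tt \asymp \|x\|^{-(d-\alpha N)}$ with constants depending only on $d$, $N$, $\alpha$, $M$, $\min_j b_j$, and $\max_j b_j$. Dividing the upper bound for $[\bm{0},2\bm{b}]$ by the lower bound for $[\bm{0},\bm{b}]$ cancels the power of $\|x\|$ and yields a constant of the required form.

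In the ``far'' regime $x \notin [-M,M]^d$ (so in particular $\|x\| > M$), I will compare the integrands pointwise. For every $\tt \in [\bm{0},2\bm{b}]$ with $|\tt|>0$, the scaling relation \eqref{eq:scaling} gives $g_\tt(x) = |\tt|^{-d/\alpha}\,g_{\vec{1}}(x/|\tt|^{1/\alpha})$, and our choice of $M$ forces $\|x/|\tt|^{1/\alpha}\| = \|x\|/|\tt|^{1/\alpha} \ge \|x\|/M > 1$, since $|\tt|^{1/\alpha} \le (2|\bm{b}|)^{1/\alpha} = M$. Hence \eqref{eq:g:bounds} with $R=1$ provides the two-sided sandwich $g_\tt(x) \asymp |\tt|\,\|x\|^{-(d+\alpha)}$ with constants depending only on $d$ and $\alpha$. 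Applying the upper bound to the left-hand side of \eqref{Eq:compare} and the lower bound to the right-hand side, the ratio is dominated by a universal constant times $\int_{[\bm{0},2\bm{b}]}|\tt|\,\d\tt \big/ \int_{[\bm{0},\bm{b}]}|\tt|\,\d\tt = 2^{N+1}$, a quantity depending only on $N$.

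The argument is essentially routine; the only delicate point is that the threshold $M$ must be chosen large enough that the rescaled argument $x/|\tt|^{1/\alpha}$ falls in the far-field regime of $g_{\vec{1}}$ \emph{uniformly} over $\tt \in [\bm{0},2\bm{b}]$, so that \eqref{eq:g:bounds} is available with a single fixed $R$. The particular choice $M = (2|\bm{b}|)^{1/\alpha}$ together with $R=1$ accomplishes this cleanly, and combining the two regimes produces the required $A_2$ with the advertised dependencies on $d$, $N$, $\alpha$, and the entries of $\bm{b}$.
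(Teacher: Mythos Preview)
Your proof is correct and follows essentially the same approach as the paper: a case split on the size of $\|x\|$, with Proposition \ref{pr:resolvent} handling the near regime and the scaling relation \eqref{eq:scaling} together with the tail bounds \eqref{eq:g:bounds} handling the far regime. Your version is in fact slightly cleaner than the paper's, since your explicit choice $M=(2|\bm{b}|)^{1/\alpha}$ guarantees that $\|x\|/|\tt|^{1/\alpha}>1$ throughout $[\bm{0},2\bm{b}]$, so \eqref{eq:g:bounds} applies directly with $R=1$; the paper instead takes an unspecified $M>1$ and compensates with the positivity and continuity of $g_{\vec 1}$ to cover the intermediate range.
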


\begin{pf}
Let $M > 1$ be a constant. If $x \in[-M, M]^d$, then \eqref{Eq:compare}
follows from Proposition~\ref{pr:resolvent}. And if $\|x\| \ge M$,
then \eqref{Eq:compare} holds because
of \eqref{eq:scaling} and \eqref{eq:g:bounds}, together with the
well-known fact
that $g_1$ is continuous and strictly positive everywhere; compare
with the first line in \eqref{Eq:432}.
\end{pf}

\subsection{First part of the proof}\label{subsec:LB}

Our goal, in this first half, is to prove the following:
%
\begin{equation}
\label{first:half} \qquad\mathcal{C}_{d-\alpha N}(E\times F)>0 \quad \Longrightarrow \quad \P \bigl\{W(E)
\cap\X_\alpha\bigl(\R^N_+\bigr)\cap F \neq\O \bigr\} >0.
\end{equation}
By Lemma~4.1 in \cite{KX05}, it is equivalent to prove
%
\begin{equation}
\label{first:half2}\quad \mathcal{C}_{d-\alpha N}(E\times F)>0 \quad \Longrightarrow \quad \E \bigl\{
\lambda_d \bigl( \bigl(W(E)\cap F\bigr) \ominus \X_\alpha
\bigl(\R^N_+\bigr) \bigr) \bigr\} >0,
\end{equation}
where $\lambda_d$ is the Lebesgue measure in $\R^d$ and
$A\ominus B:= \{a-b\dvtx  a \in A, b \in B\}$.

First, let us make some reductions. Because $E\subset(0,\infty)$ and
$F\subset\R^d$
are assumed to be compact, there exists a
$q\in(1,\infty)$ such that
%
\begin{equation}
\label{eq:q} E\subseteq \bigl[ q^{-1}, q \bigr] \quad \mbox{and}\quad  F
\subseteq [ -q, q ]^d.
\end{equation}
We will use $q$ for this purpose unwaiveringly. Notice that if, either
\[
\E \bigl\{\lambda_d \bigl(\X_\alpha\bigl(\R^N_+\bigr) \bigr) \bigr\} >0
\]
 or
there exist some $n \le N-1$
and distinct $i_1, \ldots, i_n \in\{1, \ldots, N\}$ such that
\[
\E \bigl\{ \lambda_d \bigl( \bigl(W(E)\cap F\bigr) \ominus
\X_{i_1, \ldots, i_n}\bigl(\R^n_+\bigr) \bigr) \bigr\} >0,
\]
then (\ref{first:half2}) holds trivially.
In the above, similarly to (\ref{def:X}), $\X_{i_1, \ldots, i_n}$ is
defined by
\[
\X_{i_1 ,\ldots, i_n}(\tt) = \sum_{k=1}^n
X^{(i_k)} (t_{i_k}) \qquad \mbox{for all } \tt:=(t_{i_1},
\ldots,t_{i_n})\in\R^n_+.
\]
Hence, without loss of generality, we can and will assume that
$\E (\X_\alpha(\R^N_+) ) =0$ and $\E \{ \lambda
_d ( (W(E)\cap F)
\ominus \X_{i_1, \ldots, i_n}(\R^n_+) ) \}= 0$ for all $n
\le N-1$
and all distinct $i_1, \ldots, i_n \in\{1, \ldots, N\}$. Since each
L\'evy
process $X_j$ has only countable number of jumps, this assumption implies
%
\begin{equation}
\label{first:half3} \lambda_d \bigl\{ \bigl(W(E)\cap F\bigr) \ominus
\bigl( \overline{\X_\alpha\bigl(\R^N_+\bigr)} \backslash
\X_\alpha \bigl(\R^N_+\bigr) \bigr) \bigr\}=0 \qquad \P\mbox{-a.s.}
\end{equation}

Now we provide some preliminary result for proving (\ref
{first:half2}). Define
%
\begin{equation}
\label{eq:f:phi} f_\epsilon(x):= \frac{1}{\nu_d\epsilon^d}\1_{\B_0(\epsilon)}(x)
\quad \mbox{and} \quad \phi_\epsilon(x):= (f_\epsilon*f_\epsilon) (x).
\end{equation}

For every $\mu\in\mathcal{P}_d(E\times F)$ and
$\epsilon>0$ we define a random variable $Z_\epsilon(\mu)$ by
%
\begin{equation}
\label{eq:I} Z_\epsilon(\mu):= \int_{[1,2]^N}\d\uu \int
_{E\times F}\mu(\d s\, \d x) \phi_\epsilon\bigl(W(s)-x\bigr)
\phi_\epsilon \bigl( \X_\alpha(\uu)-x \bigr).
\end{equation}

%
\begin{lemma}\label{lem:EZ}
There exists a constant $a\in(0,\infty)$ such that
%
\begin{equation}
\inf_{\mu\in\mathcal{P}_d(E\times F)} \inf_{\epsilon\in(0,1)}\E \bigl[
Z_\epsilon(\mu) \bigr] \ge a.
\end{equation}
\end{lemma}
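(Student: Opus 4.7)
The plan is to use the independence of $W$ and $\X_\alpha$ to reduce $\E[Z_\e(\mu)]$ to a double integral involving the two transition densities, and then obtain a uniform positive lower bound by exploiting the compactness of $E$, $F$, and $[1\,,2]^N$ together with the cutoff $\e<1$.

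First I would observe that $\phi_\e=f_\e*f_\e$ is a nonnegative, symmetric function with $\int_{\R^d}\phi_\e(x)\,\d x=1$ and support in $\B_0(2\e)$. Applying Fubini's theorem and the independence of $W$ and $\X_\alpha$ gives
\begin{equation*}
   \E\left[Z_\e(\mu)\right] = \int_{[1,2]^N}\d\uu\int_{E\times F}\mu(\d s\,\d x)\ (\phi_\e*p_s)(x)\cdot(\phi_\e*g_\uu)(x),
\end{equation*}
where $p_s$ is the Brownian density from \eqref{eq:heat:kernel} and $g_\uu$ is the density \eqref{eq:g:FT} of $\X_\alpha(\uu)$; symmetry of $p_s$ and $g_\uu$ is used to write the expectations as convolutions.

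Next I would bound each convolution factor from below uniformly. Writing $(\phi_\e*p_s)(x)=\int\phi_\e(y)\,p_s(x-y)\,\d y$ and restricting to $\e\in(0\,,1)$, the argument $x-y$ lies in the compact set $[-q-2\,,q+2]^d$ whenever $x\in F$, while $s\in E\subseteq[q^{-1}\,,q]$ by \eqref{eq:q}. The continuous, strictly positive map $(s\,,z)\mapsto p_s(z)$ is therefore bounded below by some $c_1>0$ on this region, whence $(\phi_\e*p_s)(x)\ge c_1$. An analogous argument handles $(\phi_\e*g_\uu)(x)$: for $\uu\in[1\,,2]^N$ one has $|\uu|\in[N\,,2N]$, so combining the scaling \eqref{eq:scaling} with the continuity and strict positivity of $g_{\vec{1}}$ (already invoked in Lemma \ref{lem:resolvent}) produces a uniform lower bound $c_2>0$ on $(\phi_\e*g_\uu)(x)$.

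Substituting and using that $\mu$ is a probability measure and $|[1\,,2]^N|=1$,
\begin{equation*}
   \E\left[Z_\e(\mu)\right] \ge c_1 c_2 \int_{[1,2]^N}\d\uu\int_{E\times F}\mu(\d s\,\d x) = c_1 c_2,
\end{equation*}
which proves the lemma with $a:=c_1c_2$. There is no serious obstacle: the only point to verify is that the cutoff $\e<1$ together with the $\uu$-range $[1\,,2]^N$ (bounded away from $\bm{0}$) keep the arguments of both $p_s$ and $g_\uu$ inside compact sets on which each density is continuous and strictly positive.
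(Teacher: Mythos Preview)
Your proof is correct and follows essentially the same approach as the paper: both factor $\E[Z_\e(\mu)]$ via independence into a product $(\phi_\e*p_s)(x)\cdot(\phi_\e*g_\uu)(x)$ and then bound each factor below using the strict positivity and continuity of the densities on compact parameter sets. The only cosmetic difference is that the paper first records the pointwise inequality $\phi_\e\ge 2^{-d}f_{\e/2}$ (which it reuses later) before passing to the infimum of the density over a neighborhood, whereas you work directly with the support and unit mass of $\phi_\e$; your route is slightly more direct here but yields the same conclusion.
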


\begin{pf}
Thanks to the triangle inequality,
whenever $u\in\B_0(\epsilon/2)$ and $v\in\B_0(\epsilon/2)$,
we have $u-v\in\B_0(\epsilon)$ and $v\in\B_0(\epsilon)$. Therefore,
for all $u\in\R^d$ and $\epsilon>0$,
%
\begin{eqnarray}
\label{eq:phi:LB} %
\phi_\epsilon(u) &=&\frac{1}{\nu_d^2\epsilon^{2d}} \int
_{\R^d} \1_{\B_0(\epsilon)}(u-v)\1_{\B_0(\epsilon)}(v) \,\d v\nonumber
\\[-8pt]\\[-8pt]
&\ge&\frac{1}{\nu_d^2\epsilon^{2d}} \1_{\B_0(\epsilon/2)}(u)\int_{\R^d}
\1_{\B_0(\epsilon/2)}(v)\, \d v \ge2^{-d} f_{\epsilon/2}(u).\nonumber %
\end{eqnarray}
Because $f_{\epsilon/2}$ is a probability density, and since
$\epsilon\in(0,1)$, the preceding implies that for all $\uu\in[1, 2]^N$
and $x \in\R^d$,
%
\begin{eqnarray}
\label{Eq:435} %
(\phi_\epsilon*g_{\uu}) (x) &=&\int
_{\R^d}\phi_\epsilon(u) g_{\uu}(x-u)\, \d u\nonumber
\\[-8pt]\\[-8pt]
&\ge&2^{-d}\int_{\R^d} f_{\epsilon/2}(u)g_{\uu}(x-u)
\,\d u \ge2^{-d}\inf_{\|z-x\|\le1/2} g_{\uu}(z).\nonumber
\end{eqnarray}
Since $F\subset[-q,q]^d$, (\ref{Eq:435}) and (\ref{eq:scaling})
in the Lemma~\ref{lem:resolvent} tell us that
%
\begin{equation}
a_0:=\inf_{\uu\in[1,2]^N}\inf_{x\in F}\inf
_{\epsilon\in
(0,1)}(\phi_\epsilon*g_{\uu}) (x) >0.
\end{equation}
And, therefore, for all $\epsilon>0$ and $\mu\in\mathcal
{P}_d(E\times F)$,
%
\begin{eqnarray}
\E \bigl[ Z_\epsilon(\mu) \bigr] &=&\int_{E\times F}\mu(\d s
\,\d x) (\phi_\epsilon*p_s) (x)\int_{[1,2]^N} \d
\uu(\phi_\epsilon*g_{\uu}) (x)
\nonumber
\\
&\ge& a_0\int_{E\times F}(
\phi_\epsilon*p_s) (x) \mu(\d s\, \d x) \\
&\ge& a_0\inf
_{s\in[1/q,q]}\inf_{x\in F}\inf_{\epsilon\in
(0,1)}(
\phi_\epsilon*p_s) (x),\nonumber
\end{eqnarray}
which is clearly positive.
\end{pf}

%
\begin{proposition}\label{pr:EZ^2}
There exists a constant $b\in(0,\infty)$ such that the following
inequality holds simultaneously
for all $\mu\in\mathcal{P}_d(E\times F)$:
%
\begin{equation}
\sup_{\epsilon>0} \E \bigl(\bigl\llvert Z_\epsilon(\mu)\bigr
\rrvert ^2 \bigr) \le b\mathcal{E}_{d-\alpha N}(\mu).
\end{equation}
\end{proposition}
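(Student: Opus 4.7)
The plan is to expand $|Z_\e(\mu)|^2$, use the independence of $W$ from $\X_\alpha$ together with Fubini, and bound each of the two resulting expectation factors by a convolution with the positive-definite probability density $\sigma_\e := \phi_\e * \phi_\e$, before invoking the isoperimetric inequality of Proposition~\ref{pr:isoper} to absorb the smoothings and produce the kernel $p_{|t-s|}(x-y)\|x-y\|^{-(d-\alpha N)}$. Concretely, after expansion one has
\[
\E\bigl[|Z_\e(\mu)|^2\bigr] = \iint \mu(\d s\,\d x)\,\mu(\d t\,\d y)\, A_\e(s,x,t,y)\int_{[1,2]^{2N}} B_\e(\uu,\vv,x,y)\,\d\uu\,\d\vv,
\]
where $A_\e := \E[\phi_\e(W(s)-x)\phi_\e(W(t)-y)]$ and $B_\e := \E[\phi_\e(\X_\alpha(\uu)-x)\phi_\e(\X_\alpha(\vv)-y)]$. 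The diagonal $\{s=t\}$ is $(\mu\otimes\mu)$-null because $\mu\in\mathcal{P}_d(E\times F)$ is diffuse, so I may (and will) assume $s\neq t$ throughout.

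For the Brownian factor I would take $s<t$ without loss, condition on $W(s)$, and then pull out the uniform pointwise bound $p_s(z)\le(2\pi q^{-1})^{-d/2}=:M_0$ (valid because $s\ge q^{-1}$). After a short change-of-variables calculation that exploits $\phi_\e$ being even, this produces
\[
A_\e(s,x,t,y)\le M_0\,(\sigma_\e\ast p_{|t-s|})(x-y).
\]

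For the stable factor I would use the common-past decomposition $\X_\alpha(\uu)=Z+I_\uu$ and $\X_\alpha(\vv)=Z+I_\vv$, where $Z:=\X_\alpha(\uu\wedge\vv)$ and the three summands $Z$, $I_\uu$, $I_\vv$ are mutually independent with $I_\uu,I_\vv$ having densities $g_{(\uu-\vv)_+}$ and $g_{(\vv-\uu)_+}$. Since $|\uu\wedge\vv|\ge N$ on $[1,2]^{2N}$, the scaling identity \eqref{eq:scaling} produces a uniform pointwise bound $g_{\uu\wedge\vv}(z)\le C_1$. Integrating the $Z$-variable first and rearranging into a convolution, I expect
\[
B_\e(\uu,\vv,x,y)\le C_1\bigl(g_{(\uu-\vv)_+}\ast g_{(\vv-\uu)_+}\ast\sigma_\e\bigr)(x-y).
\]
The first two factors combine to the density of an isotropic $\alpha$-stable variable at scale $|\uu-\vv|$. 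After the change of variables $\uu\mapsto\uu-\vv$, integration over $[1,2]^{2N}$ reduces to an integral of the form $\int_{[0,1]^N} g_{\bm r}(\cdot)\,\d\bm r$, which Proposition~\ref{pr:resolvent} (applied with $\bm b=\vec 1$) dominates by $\text{const}\cdot\|\cdot\|^{-(d-\alpha N)}$. This gives $\int B_\e\,\d\uu\,\d\vv\le C_2(K\ast\sigma_\e)(x-y)$ with $K(w):=\|w\|^{-(d-\alpha N)}$.

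Combining these two bounds reduces matters to estimating $\iint (p_{|t-s|}\ast\sigma_\e)(x-y)(K\ast\sigma_\e)(x-y)\,\mu(\d s\,\d x)\,\mu(\d t\,\d y)$. Since $K$ is the lower-semicontinuous positive-definite Riesz kernel and equals $+\infty$ only at the origin (using $d>\alpha N$), and since $\sigma_\e$ is a positive-definite probability density, Proposition~\ref{pr:isoper} applies with $\kappa=K$ and $\nu=\sigma=\sigma_\e$ to strip off both smoothings, leaving $\iint p_{|t-s|}(x-y)\|x-y\|^{-(d-\alpha N)}\,\mu\otimes\mu=(2\pi)^{-d/2}\mathcal{E}_{d-\alpha N}(\mu)$, which is the desired inequality. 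The hard part is the stable factor: unlike $(W(s),W(t))$, the pair $(\X_\alpha(\uu),\X_\alpha(\vv))$ is not a two-step Markov chain, so isolating the correct common-past decomposition in order to reduce to the resolvent estimate of Proposition~\ref{pr:resolvent} is the one non-routine step in the argument.
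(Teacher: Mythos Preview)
Your argument is correct and follows essentially the same route as the paper's: bound the Brownian and additive--stable factors separately via the Markov property, reduce to an integral of the form $\iint (\text{smoothed }p_{|t-s|})(x-y)\cdot(\text{smoothed resolvent kernel})(x-y)\,\mu\otimes\mu$, invoke Proposition~\ref{pr:isoper} to strip the smoothing, and finish with Proposition~\ref{pr:resolvent}. The only cosmetic differences are that the paper obtains the smoothing $\phi_{8\e}$ via a triangle--inequality trick with $f_\e$ (rather than your cleaner $\sigma_\e=\phi_\e*\phi_\e$ from direct density integration), and that the paper applies Proposition~\ref{pr:isoper} with $\kappa=\int_{[0,1]^N}g_{\uu}\,\d\uu$ itself and bounds by the Riesz kernel afterward, whereas you bound first and take $\kappa=\|\cdot\|^{-(d-\alpha N)}$ in Proposition~\ref{pr:isoper}.
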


\begin{pf}
First of all, let us note the following complement to \eqref{eq:phi:LB}:
%
\begin{equation}
\label{eq:phi:UB} \phi_\epsilon(z) \le2^d f_{2\epsilon}(z)
\qquad \mbox{for all $\epsilon>0$ and $z\in\R^d$}.
\end{equation}

Define, for the sake of notational simplicity,
%
\begin{equation}
\mathcal{Q}_\epsilon(t,x;s,y):= \phi_\epsilon\bigl(W(t)-x\bigr)
\phi _\epsilon\bigl(W(s)-y\bigr).
\end{equation}
Next, we apply the Markov property to find that
for all $(t,x)$ and $(s,y)$ in $E\times F$ such that $s<t$,
and all $\epsilon>0$,
%
\begin{equation}
\E \bigl[\mathcal{Q}_\epsilon(t,x;s,y) \bigr] =\E \bigl[
\phi_\epsilon\bigl( W(s)-y\bigr) \phi_\epsilon\bigl(\tilde
{W}(t-s)+W(s)-x\bigr) \bigr],
\end{equation}
where $\tilde{W}$ is a Brownian motion independent of $W$.
An application of \eqref{eq:phi:UB} yields
%
\begin{eqnarray}
\label{Eq:442} %
\E \bigl[\mathcal{Q}_\epsilon(t,x;s,y)
\bigr] &\le&4^d\E \bigl[f_{2\epsilon}\bigl( W(s)-y\bigr)
f_{2\epsilon}\bigl(\tilde {W}(t-s)+W(s)-x\bigr) \bigr]\nonumber
\\[-8pt]\\[-8pt]
&\le&8^d\E \bigl[f_{2\epsilon}\bigl( W(s)-y\bigr)
f_{4\epsilon}\bigl(\tilde {W}(t-s)-x+y\bigr) \bigr],\nonumber %
\end{eqnarray}
thanks to the triangle inequality. Consequently, we may apply
independence and \eqref{eq:phi:LB} to find that
%
\begin{eqnarray}
\label{Eq:443} %
\E \bigl[\mathcal{Q}_\epsilon(t,x;s,y)
\bigr] &\le&8^d \E \bigl[ f_{2\epsilon}\bigl(W(s)-y\bigr) \bigr]
\cdot \E \bigl[f_{4\epsilon}\bigl(W(t-s)-x+y\bigr) \bigr]\nonumber
\\
&\le&32^d \E \bigl[ \phi_{4\epsilon}\bigl(W(s)-y\bigr) \bigr]
\cdot \E \bigl[\phi_{8\epsilon}\bigl(W(t-s)-x+y\bigr) \bigr]
\\
& =&32^d(\phi_{4\epsilon}*p_s) (y)\cdot(
\phi_{8\epsilon}*p_{t-s}) (x-y).\nonumber %
\end{eqnarray}
Since $s\in E$, it follows that $s\ge1/q$, and hence
$\sup_{z\in\R^d}p_s(z)\le p_{1/q}(0)$. Thus,
%
\begin{equation}
\hspace*{12pt}\E \bigl[\phi_\epsilon\bigl(W(t)-x\bigr)\phi_\epsilon\bigl(W(s)-y
\bigr) \bigr] \le 32^d p_{1/q}(0)\cdot(\phi_{8\epsilon}*p_{t-s})
(x-y).
\end{equation}
By symmetry, the following holds for all $(t,x),(s,y)\in E\times F$
and $\epsilon>0$:
%
\begin{equation}
\label{eq:P1} \hspace*{12pt}\E \bigl[\phi_\epsilon\bigl(W(t)-x\bigr)
\phi_\epsilon\bigl(W(s)-y\bigr) \bigr] \le 32^d
p_{1/q}(0)\cdot(\phi_{8\epsilon}*p_{|t-s|}) (x-y).
\end{equation}
Similarly, we can show that for all $(\uu,x),(\vv,y)\in[1,2]^N\times F$
and $\epsilon>0$:
%
\begin{equation}
\label{eq:G1} \E \bigl[\phi_\epsilon\bigl(\X_\alpha(\uu)-x\bigr)
\phi_\epsilon\bigl(\X _\alpha(\vv)-y\bigr) \bigr] \le
16^d K\cdot(\phi_{8\epsilon}*g_{\uu-\vv}) (x-y),
\end{equation}
where $K:=g_{(1/q,\ldots,1/q)}(0)<\infty$ by \eqref{eq:g:FT},
and the definition of $g_{\tt}(z)$ has\vspace*{1pt} been extended to all $\tt
\in\R^N \setminus\{0\}$ by symmetry, namely,
%
\begin{equation}
\qquad g_{\tt}(z):= |\tt|^{-d/\alpha}g_{\vec{1}} \biggl(
\frac{z}{|\tt
|^{1/\alpha}} \biggr)\qquad  \mbox{for all $z\in\R^d$ and $\tt\in
\R^N\setminus\{0\}$},
\end{equation}
where we recall $\vec{1}:=(1,\ldots,1)\in\R^N$.

To verify (\ref{eq:G1}), we define $Z_1 = \X_\alpha(\uu) - \X
_\alpha(\uu\curlywedge\vv)$
and $Z_2 = \X_\alpha(\vv) - \X_\alpha(\uu\curlywedge\vv)$,
where $\uu\curlywedge\vv
= (u_1\wedge v_1, \ldots, u_N\wedge v_N)$. Then the random variables
$Z_1, Z_2$ and
$ \X_\alpha(\uu\curlywedge\vv)$ are independent. Similarly to
(\ref{Eq:442}) and (\ref{Eq:443}),
the left-hand side of (\ref{eq:G1}) is bounded from above by
%
\begin{equation}
\label{Eq:448} 8^d \E \bigl[ f_{2\epsilon}\bigl(Z_1 +
\X_\alpha(\uu\curlywedge\vv) -x\bigr) f_{4\epsilon}(Z_2-Z_1
+x-y) \bigr].
\end{equation}
By conditional on $Z_1$ and $Z_2$ and applying the unmorality of $\X
_\alpha(\uu\curlywedge\vv)$
(see Remark~2.3 in \cite{KX02}), we see that (\ref{Eq:448}) is at most
%
\begin{eqnarray}
\label{Eq:449} %
&&\frac{8^d}{\nu_d (2 \epsilon)^d} \P \bigl[ \bigl|\X_\alpha(\uu
\curlywedge\vv)\bigr|\le2 \epsilon \bigr] \E \bigl[ f_{4\epsilon}(Z_2-Z_1
+x-y) \bigr]\nonumber
\\[-8pt]\\[-8pt]
&&\qquad\le16^d g_{(1/q,\ldots,1/q)}(0) \cdot(\phi_{8\epsilon}*g_{ \uu
-\vv})
(x-y),\nonumber %
\end{eqnarray}
where we have also use the fact that $Z_2-Z_1$ has density function
$g_{ \uu-\vv}$.
This proves (\ref{eq:G1}).

It follows easily from (\ref{eq:P1}) and (\ref{eq:G1}) that $\E(|
Z_\epsilon(\mu)|^2)$
is bounded from above by a constant multiple of
%
\begin{eqnarray}
&&\int\int(\phi_{8\epsilon}*p_{|t-s|}) (x-y) \biggl(\int
_{[1,2]^{2N}} (\phi_{8\epsilon}*g_{\uu-\vv}) (x-y)\, \d\uu\,\d
\vv \biggr)\nonumber
\\[-8pt]\\[-8pt]
&&\hphantom{\int\int}{}\times\mu(\d t \,\d x) \mu(\d s \,\d y), \nonumber%
\end{eqnarray}
uniformly for all $\epsilon>0$.
Define
%
\begin{equation}
\label{nice:kappa} \kappa(z):= \int_{[0,1]^N} g_{\uu}(z)
\,\d\uu\qquad \mbox{for all $z\in \R^d$}.
\end{equation}
Then we have shown that, uniformly for every $\epsilon>0$,
%
\begin{eqnarray}
\label{jug1} \E \bigl(\bigl\llvert Z_\epsilon(\mu)\bigr\rrvert
^2 \bigr)
&\le&\mbox{const}\cdot \int\int(\phi_{8\epsilon}*p_{|t-s|})
(x-y) (\phi_{8\epsilon}*\kappa) (x-y)\nonumber \\[-8pt]\\[-8pt]
&&\hphantom{\mbox{const}\cdot \int\int}{}\times\mu(\d t \,\d x) \mu(\d s \,\d y).\nonumber
\end{eqnarray}
It follows easily from \eqref{eq:g:FT} that the conditions
of Proposition~\ref{pr:isoper} are met for $\sigma(\d x):=
\nu(\d x):= \phi_{8\epsilon}(x) \,\d x$ and, therefore, that proposition
yields the following bound: Uniformly for all $\epsilon>0$,
%
\begin{equation}
\hspace*{12pt}\E \bigl(\bigl\llvert Z_\epsilon(\mu)\bigr\rrvert ^2 \bigr)
\le\mbox{const}\cdot \int\int p_{|t-s|}(x-y) \kappa(x-y) \mu(\d t \,\d x)
\mu(\d s \,\d y).
\end{equation}
According to Proposition~\ref{pr:resolvent},
$\kappa(z) \le\mbox{ const }/\|z\|^{d-\alpha N}$
uniformly for all $z\in\{x-y\dvtx  x,y\in F\}$, and the proof is thus
completed.
\end{pf}

Now we establish \eqref{first:half}.

\begin{pf*}{Proof of Theorem~\ref{th:main2} (\normalfont{First half})}
If $\mathcal{C}_{d-\alpha N}(E\times F)>0$, then there
exists $\mu_0\in\mathcal{P}_d(E\times F)$ such that
$\mathcal{E}_{d-\alpha N}(\mu_0)<\infty$, by definition.
We apply the Paley--Zygmund inequality \cite{Kh}, page 72, to
Lemma~\ref{lem:EZ} and Proposition~\ref{pr:EZ^2}, with $\mu$ replaced by $\mu_0$,
to find that for all $\epsilon>0$,
%
\begin{equation}
\label{eq:rhs1} \P \bigl\{ Z_\epsilon(\mu_0)>0 \bigr\} \ge
\frac{
\llvert  \E Z_\epsilon(\mu_0)\rrvert ^2}{%
\E (\llvert  Z_\epsilon(\mu_0)\rrvert ^2 )} \ge\frac{a^2/b}{\mathcal{E}_{d-\alpha N}(\mu_0)}.
\end{equation}
If $Z_\epsilon(\mu_0)(\omega)>0$ for some $\omega$ in the underlying
sample space, then it follows from \eqref{eq:I}
and \eqref{eq:phi:LB} that
%
\begin{equation}
\inf_{s\in E}\inf_{x\in F}\inf
_{\uu\in[1,2]^N} \max \bigl( \bigl\|W(s)-x\bigr\|, \bigl\|\X_\alpha(\uu)-x\bigr\|
\bigr) (\omega)\le\epsilon
\end{equation}
for the very same $\omega$. Letting $\epsilon\to0$ in \eqref
{eq:rhs1} we see that,
as the right-most term in \eqref{eq:rhs1} is independent of $\epsilon
>0$, the preceding
establishes
\[
\P \bigl\{ W(E)\cap\overline{\X_\alpha\bigl([a,b]^N\bigr)}
\cap F \neq\O \bigr\}>0.
\]
From the proof of Lemma~4.1 in \cite{KX05}, we see that the above implies
%
\begin{equation}
\label{first:half4} \E \bigl\{ \lambda_d \bigl( \bigl(W(E)\cap F\bigr)
\ominus \overline{\X_\alpha\bigl([a, b]^N\bigr)} \bigr) \bigr
\}>0.
\end{equation}
Because of (\ref{first:half3}), we obtain \eqref{first:half2}. This proves
the first half of the proof of Theorem~\ref{th:main2}.
\end{pf*}

\subsection{Second part of the proof}\label{subsec:UB}

For the second half of our proof, we aim to prove that
%
\begin{equation}
\label{eq:goal2} \quad\P \bigl\{ W(E)\cap\X_\alpha\bigl([a,b]^N
\bigr) \cap F \neq\O \bigr\}>0 \quad \Longrightarrow\quad \mathcal{C}_{d-\alpha N}(E\times F)>0
\end{equation}
for all positive real numbers $a<b$.
This would complete our derivation of Theorem~\ref{th:main2}. In order to
simplify the exposition, we make some reductions. Since $F$ has
Lebesgue measure 0, we may and will assume that $E$ has no isolated points.
Furthermore, we will take $[a, b]^N=
[1,{\sfrac{3}{2}} ]^N$.

Henceforth, we assume that the displayed probability in (\ref{eq:goal2})
is positive. Let $\partial$ be a point that is not in $\R_+\times\R^N_+$,
and we define an $E\times[1,\sfrac{3}{2}]^N \cup\{\partial\}
$-valued random
variable $T = (S, \mathbf{U})$ as follows:
\begin{enumerate}[2.]
\item[1.] If there is no $(s,\uu)\in E\times[1,\sfrac{3}{2}]^N$
such that $W(s)= \X_\alpha(\uu) \in F$, then
$T= (S, \mathbf{U}):= \partial$.
\item[2.] If there exists $(s,\uu)\in E\times[1,\sfrac{3}{2}]^N$
such that $W(s)= \X_\alpha(\uu) \in F$, then we define
$T = (S, \mathbf{U})$ inductively. Let $S$ denote the first time in $E$
when $W$ hits $\X_\alpha( [1,{\sfrac{3}{2}} ]^N) \cap F$, namely,
%
\begin{equation}
\label{Eq:S} S:= \inf \bigl\{s \in E\dvtx  W(s) \in\X_\alpha \bigl(
[1,{ \sfrac{3}{2}} ]^N \bigr) \cap F
\bigr\}.
\end{equation}
It follows from (\ref{Eq:S}) that there is a sequence $(s^n, \uu^n)
\in E\times[1,\sfrac{3}{2}]^N$
such that $s^n \downarrow S$ and $W(s^n) = \X_\alpha (\uu^n
) \in F$ for all $n \ge1$.
Notice that for any subsequence of $\{\uu^n\}$, say $\{\uu^{n_k}\}$,
which converges
to some $\uu= (u_1, \ldots, u_N) \in[1,\sfrac{3}{2}]^N$, we have
$\lim_{k \to\infty} \X_\alpha (\uu^{n_k}  ) = W(S)$.
The limit on the left-hand side
can be expressed as the sum of left or right limits of the L\'evy
processes $X^{(j)}$ at $u_j$
($j=1, \ldots, N$).
For simplicity of notation, we denote this limit by $\overline{\X
}_\alpha(u_1, \ldots, u_N)$.
Then we can define inductively,
%
\begin{eqnarray}
U_1 &:=& \inf %
\bigl\{
u_1 \in[1,\sfrac{3}{2}]\dvtx  \overline{
\X}_\alpha(u_1, u_2, \ldots, u_N) =
W(S)\nonumber\\
&&\hspace*{68pt}\mbox{for some } u_2, \ldots, u_N \in
[1,\sfrac{3}{2}] \bigr\},
\nonumber\\
U_2 &:= &\inf %
\bigl\{ u_2 \in
[1,\sfrac{3}{2}]\dvtx  \overline{\X}_\alpha(U_1, u_2, \ldots, u_N) = W(S)\nonumber\\[-8pt]\\[-8pt]
&&\hspace*{69pt}\mbox{for some }
u_3, \ldots, u_N \in[1,\sfrac{3}{2}]
\bigr\},\nonumber
\\
& \vdots&\nonumber
\\
U_N &:=& \inf \bigl\{ u_N \in[1,
\sfrac{3}{2}]\dvtx  \overline{\X}_\alpha(U_1,
\ldots,U_{N-1}, u_N) = W(S) \bigr\}.\nonumber %
\end{eqnarray}
\end{enumerate}
Note that $\mathbf{U} = (U_1, \ldots, U_N) \in[1,\sfrac{3}{2}]^N$ and
$\overline{\X}_\alpha(\mathbf{U}) = W(S) \in F$ on the event $\{ (S,
\mathbf{U}) \neq\partial\}$.


Now for every two Borel sets $G_1 \subseteq E$ and $G_2 \subseteq F$ we define
%
\begin{equation}
\label{eq:mu-eta} \mu(G_1\times G_2):= \P \bigl\{ S\in
G_1, \overline{\X}_\alpha( \mathbf{U}) \in G_2 | T
\neq\partial \bigr\}.
\end{equation}
Since $\P\{ T \neq\partial\} > 0$, it follows that $\mu$ is a
bona fide probability measure on $E\times F$. Moreover,
$\mu\in\mathcal{P}_d(E\times F)$, since for every $t > 0$,
%
\begin{equation}
\qquad \mu \bigl(\{t\}\times F \bigr) = \P \bigl\{ S = t, \overline{\X
}_\alpha( \mathbf{U}) \in F | T \neq\partial \bigr\} \le\frac{\P\{W(t) \in F\}} {\P\{ T \neq\partial\}} =
0,
\end{equation}
because $F$ has Lebesgue measure 0.

For every $\epsilon>0$, we define $Z_\epsilon(\mu)$ by \eqref{eq:I},
but insist on one (important) change. Namely, now, we use the
Gaussian mollifier,
%
\begin{equation}
\label{Eq:Gk} \phi_\epsilon(z):= \frac{1}{(2\pi\epsilon^2)^{d/2}}\exp \biggl( -
\frac{\|z\|^2}{2\epsilon^2} \biggr),
\end{equation}
in place of $f_\epsilon*f_\epsilon$. (The change in the notation is used
only in this portion of the present proof.)

Thanks to the proof of Lemma~\ref{lem:EZ},
%
\begin{equation}
\label{eq:M1} \inf_{\epsilon\in(0,1)} \E \bigl[ Z_\epsilon(\mu)
\bigr] >0.
\end{equation}
We can argue, as we did in the proof of \eqref{jug1} [e.g., up to a
constant factor,
the inequalities (\ref{eq:P1}) and (\ref{eq:G1}) still hold], to find that
%
\begin{eqnarray}
\quad\quad\sup_{\epsilon\in(0,1)}\E \bigl(\bigl\llvert Z_\epsilon(\mu) \bigr
\rrvert ^2 \bigr)&\le&\mbox{const}\cdot\int\int (\phi_{8\epsilon}*p_{|t-s|})
(x-y) (\phi_{8\epsilon}*\kappa) (x-y)\nonumber \\[-8pt]\\[-8pt]
\quad\quad&&\hphantom{\mbox{const}\cdot\int\int}{}\times\mu(\d s\, \d x) \mu(\d t \,\d y),\nonumber
\end{eqnarray}
where $\kappa$ is defined by \eqref{nice:kappa}. Define
%
\begin{equation}
\label{eq:tildekappa} \tilde\kappa(z):= \int_{[0,1/2]^N}
g_{\tt}(z) \,\d\tt\qquad  \mbox{for all $z\in\R^d$}.
\end{equation}
Thanks to Lemma~\ref{lem:resolvent:bis},
%
\begin{eqnarray}
\quad\quad\sup_{\epsilon\in(0,1)}\E \bigl(\bigl\llvert Z_\epsilon(\mu) \bigr
\rrvert ^2 \bigr)
&\le&\mbox{const}\cdot\int\int (\phi_{8\epsilon}*p_{|t-s|})
(x-y) (\phi_{8\epsilon}*\tilde\kappa) (x-y) \nonumber\\[-8pt]\\[-8pt]
\quad\quad&&\hphantom{\mbox{const}\cdot\int\int}{}\times\mu(\d s \,\d x) \mu(\d t \,\d y).\nonumber
\end{eqnarray}
Now we are ready to explain why we had to change the definition of
$\phi_\epsilon$
from $f_\epsilon*f_\epsilon$ to the present Gaussian ones: In the present
Gaussian case, both subscripts of \textup{``}$8\epsilon$\textup{''} can be replaced by
\textup{``}$\epsilon$\textup{''} at
no extra cost; see \eqref{basic} below. Here is the reason why:

First of all, note that $\phi_\epsilon$ is still positive definite;
in fact,
$\hat\phi_\epsilon(\xi)= \ee^{-\epsilon^2\|\xi\|^2/2}> 0$
for all $\xi\in\R^d$.
Next---and this is important---we can observe that
$\hat\phi_\epsilon\le\hat\phi_\delta$
whenever $0<\delta<\epsilon$. And hence,
the following holds, thanks to Remark~\ref{rem:isoper}:
%
\begin{eqnarray}
\label{basic} \quad\sup_{\epsilon\in(0,1)}\E \bigl(\bigl\llvert
Z_\epsilon(\mu) \bigr\rrvert ^2 \bigr)
&\le&\mbox{const}\cdot\int\int (\phi_\epsilon*p_{|t-s|})
(x-y) (\phi_\epsilon*\tilde\kappa) (x-y) \nonumber\\[-8pt]\\[-8pt]
\quad &&\hphantom{\mbox{const}\cdot\int\int}{}\times\mu(\d s \,\d x) \mu(\d t \,\d y).\nonumber
\end{eqnarray}
This proves the assertion that \textup{``}$8\epsilon$ can be replaced by
$\epsilon$.\textup{''}

Now define a partial order $\prec$ on $\R^N$ as follows:
$\uu\prec\vv$ if and only if $u_i\le v_i$ for all $i=1,\ldots,N$.
Let $\mathcal{X}_{\vv}$ denote the $\s$-algebra generated by the
collection $\{\X_\alpha(\uu)\}_{\uu\prec\vv}$. Also
define $\G:=\{\G_t\}_{t\ge0}$ to be the usual augmented
filtration of the Brownian motion $W$.

According to Theorem~2.3.1 of \cite{Kh}, page 405, $\{\mathcal{X}_{\vv
}\}$
is a commuting $N$-parameter filtration \cite{Kh}, page 233. Hence, so
is the
$(N+1)$-parameter filtration
%
\begin{equation}
\F:= \bigl\{ \F_{s,\uu}; s\ge0, \uu\in\R^N_+ \bigr\},
\end{equation}
where $\F_{s,\uu}:= \G_s\times\mathcal{X}_{\uu}$ is the product
$\sigma$-algebra.

Now, for any fixed $(s,\uu)\in E \times[1,\sfrac{3} {2}]^N$,
%
\begin{equation}
\E \bigl[  Z_\epsilon(\mu) \rrvert \F_{s,\uu} \bigr] \ge
\int_{V(\uu)} \d\vv \mathop{\int_{E \times F}}_{t\ge s}
\mu(\d t\, \d x) \mathcal{T}_\epsilon(t,x;\vv),
\end{equation}
where
%
\begin{equation}
V(\uu):= \bigl\{ \vv\in[1,2]^N\dvtx  u_j\le
v_j \mbox{ for all $1\le j\le N$} \bigr\}
\end{equation}
and
%
\begin{equation}
\mathcal{T}_\epsilon(t,x;\vv):=\E \bigl[ \phi_\epsilon
\bigl(W(t) -x\bigr) \phi_\epsilon\bigl(\X_\alpha(\vv) -x\bigr) |
\F_{s,\uu} \bigr].
\end{equation}
Thanks to independence, and the respective Markov properties of
the processes $W, X^{(1)},\ldots,X^{(N)}$,
%
\begin{eqnarray}
\mathcal{T}_\epsilon(t,x;\vv) &=&\E \bigl[
\phi_\epsilon\bigl(W(t) -x\bigr) | \mathcal{G}_s \bigr]\cdot\E
\bigl[ \phi_\epsilon\bigl(\X _\alpha(\vv) -x\bigr) |
\mathcal{X}_{\uu} \bigr]\nonumber
\\[-8pt]\\[-8pt]
&=& (\phi_\epsilon*p_{t-s}) \bigl(x-W(s) \bigr)\cdot(
\phi_\epsilon*g_{\vv-\uu
}) \bigl(x-\X_\alpha(\uu)\bigr).\nonumber
\end{eqnarray}
Therefore, the definition \eqref{eq:tildekappa} of $\tilde\kappa$
and the triangle inequality together reveal that with probability one,
%
\begin{eqnarray}
\label{eq:C} %
&&\E \bigl[  Z_\epsilon(\mu) \rrvert
\F_{s,\uu} \bigr]\nonumber\\
&&\qquad  \ge\1_{\{(S, \mathbf{U})\ne\partial\}}(\omega)
\\
&&\qquad \quad {} \times\mathop{\int_{E \times F }}_{t > s} (
\phi_\epsilon*p_{t-s}) \bigl(x-W(s)\bigr) (\phi_\epsilon*
\tilde\kappa) \bigl(x-\X_\alpha(\uu)\bigr) \mu(\d t \,\d x).\nonumber %
\end{eqnarray}
This inequality is valid almost
surely, simultaneously for all $s$ in a dense countable subset of $E$ (which
will be assumed as a subset of $\mathbf{Q}_+$ for simplicity of
notation) and all
$\uu\in[1,3/2]^N\cap\mathbf{Q}^N_+$.

Select points with rational coordinates that
converge, coordinatewise from the above and below, to $(S(\omega),\mathbf
{U}(\omega))$.
In this way, we find that
%
\begin{eqnarray}
\label{eq:D} %
&&\mathop{\sup_{s\in E,\mathbf{u}\in[1,\sfrac{3} {2}]^N }}_{
\mathrm{all\ rational\ coords}}
\E \bigl[  Z_\epsilon(\mu) \rrvert \F_{s,\uu} \bigr]\nonumber \\
&&\qquad \ge
\1_{\{(S, \mathbf{U})\ne\partial\}
}(\omega)
\\
&&\qquad \quad {} \times\mathop{\int_{E\times F}}_{t > S} (
\phi_\epsilon*p_{t-S}) \bigl(x-W(S)\bigr) (\phi_\epsilon*
\tilde\kappa) \bigl(x- \overline{\X}_\alpha(\mathbf{U})\bigr) \mu(\d t \,\d x).\nonumber
\end{eqnarray}
This is valid $\omega$ by $\omega$. We square both sides of \eqref{eq:D}
and then apply expectations to both sides in order to obtain the
following:
%
\begin{eqnarray}
\label{eq:SS} %
&&\E \Bigl\{ \Bigl( \sup_{(s,\uu)\in\mathbf{Q}^{N+1}_+} \E \bigl[
 Z_\epsilon(\mu) \rrvert \F_{s,\uu} \bigr]
\Bigr)^2 \Bigr\}\nonumber \\
&&\qquad \ge\P \bigl\{ (S, \mathbf{U}) \neq\partial \bigr\}
\\
&&\qquad \quad {}\times \E \biggl[  \biggl( \mathop{\int_{E\times F}}_{t > S}
\Psi_\epsilon(t,x) \mu(\d t\, \d x) \biggr)^2 \rrvert (S,
\mathbf{U}) \neq\partial \biggr],\nonumber %
\end{eqnarray}
where
\[
\Psi_\epsilon(t,x):=(\phi_\epsilon*p_{t-S})
\bigl(x-W(S)\bigr) (\phi_\epsilon*\tilde\kappa) \bigl(x- \overline{
\X}_\alpha(\mathbf{U})\bigr).
\]

According to \eqref{eq:mu-eta}, and because $W(S) = \overline{ \X
}_\alpha(\mathbf{U})$ on
$\{ (S, \mathbf{U}) \neq\partial\}$, the conditional expectation in
\eqref{eq:SS}
is equal to the following:
%
\begin{equation}
\label{5:37a} \int \biggl( \mathop{\int_{E\times F}}_{t > s}
(\phi_\epsilon*p_{t-s}) (x-y) (\phi_\epsilon*\tilde
\kappa) (x-y) \mu(\d t\, \d x) \biggr)^2 \mu(\d s\, \d y).
\end{equation}
%
In view of the Cauchy--Schwarz inequality, the quantity in
\eqref{5:37a} is at least
\[
\biggl( \mathop{\int\int_{E \times F }}_{t > s} (
\phi_{\epsilon}*p_{t-s}) (x-y) (\phi_\epsilon*\tilde\kappa)
(x-y) \mu(\d t\, \d x) \mu(\d s\, \d y) \biggr)^2,
\]
which is, in turn, greater than or equal to
%
\begin{equation}
\label{eq:SSS} \qquad\frac{1}4 \biggl( \int\int (\phi_{\epsilon}*p_{|t-s|})
(x-y) (\phi_\epsilon*\tilde\kappa) (x-y) \mu(\d t \,\d x) \mu(\d s\, \d y)
\biggr)^2,
\end{equation}
by symmetry.

The preceding estimates from below the conditional
expectation in \eqref{eq:SS}. And this yields a bound on the right-hand
side of \eqref{eq:SS}. We can also obtain a good estimate
for the left-hand side of \eqref{eq:SS}. Indeed,
the $(N+1)$-parameter filtration $\F$ is commuting; therefore,
according to Cairoli's strong $(2,2)$ inequality
\cite{Kh}, Theorem~2.3.2, page 235,
%
\begin{equation}
\label{eq:Cairoli} \E \Bigl\{ \Bigl( \sup_{(s,\uu)\in\mathbf{Q}^{N+1}_+} \E \bigl[
Z_\epsilon(\mu) \rrvert \F_{s,\uu} \bigr] \Bigr)^2
\Bigr\} \le4^{N+1} \E \bigl( \bigl\llvert Z_\epsilon(\mu )\bigr
\rrvert ^2 \bigr),
\end{equation}
and this is in turn at most a constant times the final quantity in
\eqref{eq:SSS}; compare with \eqref{basic}.
In this way, we are led to the following bound:
%
\begin{eqnarray}
&&\P \bigl\{ (S, \mathbf{U}) \neq\partial \bigr\}
\le\mbox{const}\cdot \biggl[ \int\int(\phi_\epsilon*p_{|t-s|})
(x-y) (\phi_\epsilon*\tilde\kappa) (x-y) \nonumber\\[-8pt]\\[-8pt]
&&\hspace*{198pt}{}\times\mu(\d t \,\d x) \mu(\d s\, \d y)
\biggr]^{-1}.\nonumber
\end{eqnarray}
Since the implied constant is independent of $\epsilon$,
we can let $\epsilon\downarrow0$. As the integrand is lower
semicontinuous, we obtain the following from simple real-variables
considerations:
%
\begin{eqnarray}
&&\P \bigl\{ (S, \mathbf{U}) \neq\partial \bigr\}
\le\mbox{const}\cdot \biggl[ \int\int p_{|t-s|}(x-y) \tilde
\kappa(x-y) \nonumber\\[-8pt]\\[-8pt]
&&\hspace*{141pt}{}\times\mu(\d t \,\d x) \mu(\d s\, \d y) \biggr]^{-1}.\nonumber
\end{eqnarray}
By Proposition~\ref{pr:resolvent}, the term in the reciprocated brackets
is equivalent to the energy $\mathcal{E}_{d-\alpha N} (\mu)$ of $\mu$,
and because $\mu$ is a probability measure on $E \times F$,
we obtain the following:
%
\begin{equation}
\P \bigl\{ (S, \mathbf{U}) \neq\partial \bigr\} \le\mbox{const}\cdot
\mathcal{C}_{d-\alpha N}(E\times F).
\end{equation}
This yields \eqref{eq:goal2}, and hence Theorem~\ref{th:main2}.

\subsection{Proof of Proposition \texorpdfstring{\protect\ref{pr:polar}}{1.4}}
\label{Subsec:polar}

The method for proving Theorem~\ref{th:main2} can be modified to prove
Proposition~\ref{pr:polar}.

\begin{pf*}{Proof of Proposition~\ref{pr:polar} (\normalfont{Sketch})}
The proof for the sufficiency follows a similar line as in Section~\ref{subsec:LB}; we merely exclude all appearances of $X_\alpha(\uu)$,
and keep careful track of the incurred changes. This argument is based
on a
second-moment argument and is standard. Hence, we only give a brief sketch
for the proof of the more interesting necessity.

Assume that $\P\{W(E)\cap F\neq\O\}>0$ and let $\Delta$
be a point that is not in $\R_+$. Define $
\tau:= \inf\{s \in E\dvtx  W(s) \in F\}$ on $\{W(E)\cap F\neq\O\}$,
where $\inf\varnothing:=\Delta$ (in this instance).

Let $\mu$ be the probability measure on $E\times F$ defined by
%
\begin{equation}
\label{eq:mu-2} \mu(G_1\times G_2):= \P \bigl\{ \tau\in
G_1, W(\tau) \in G_2 | \tau\neq\Delta \bigr\}.
\end{equation}
Since $F$ has Lebesgue measure 0, we have $\mu\in\mathcal
{P}_d(E\times F)$.
The rest of the proof is similar to the argument of Section~\ref{subsec:UB}, but
is considerably simpler. Therefore, we omit the many remaining details.
\end{pf*}

\section{Proof of Theorem \texorpdfstring{\protect\ref{th:dimh:ii}}{1.1}}
\label{Sec:Thm-ii}
\setcounter{footnote}{1}
Let us recall Kaufman's uniform dimension result for Brownian motion
\cite{Kaufman69}:
\textit{If $d\ge2$, then outside a single null set $\dim_{_{\mathrm{H}}}W(G)=2\dim_{_{\mathrm{H}}}G$ for all analytic
sets $G\subset\R_+$.} Note that the set $G$ can be random; that is, $G$
can depend on the Brownian path itself.
By considering the random set $G:=W^{-1}(F)$, we can reduce
the proof of Theorem~\ref{th:dimh:ii} to one about determining a
formula for
$\|\dim_{_{\mathrm{H}}}(E \cap W^{-1}(F))\|_{L^\infty(\P)}$;\footnote
{Here is where we
study the case $d\ge2$ separately from the case $d=1$. Kaufman's theorem
fails to hold for one-dimensional Brownian motion. The standard
example is the random set $G:=W^{-1}\{0\}$. For this set,
$\dim_{_{\mathrm{H}}}W(G)=\dim_{_{\mathrm{H}}}\{0\}=0$. And this quantity is
clearly different
from $2\dim_{_{\mathrm{H}}}G$, which is $1$ thanks to a well-known
theorem of Paul L\'evy.} see the paragraph that precedes \eqref{suffice}.

For this purpose, we choose and fix an $\alpha\in(0,1)$, and let
$X_\alpha$ to be a symmetric stable L\'evy process in $\R$ with
index $\alpha$. 
As before, we denote the transition probabilities of $X_\alpha$ by
%
\begin{equation}
g_t(x):= \frac{\P\{ X_\alpha(t)\in\d x\}}{\d x} = \frac{1}{\pi}\int
_0^\infty\cos\bigl(\xi|x|\bigr) \ee^{- t\xi^\alpha/2}\, \d\xi.
\end{equation}

We define $\upsilon$ to be the corresponding
\emph{1-potential density}. That is,
%
\begin{equation}
\upsilon( x):= \int_0^\infty g_t(x)
\ee^{-t} \,\d t.
\end{equation}
It is known that for all $m>0$ there exists
$c_m=c_{m,\alpha}>1$ such that
%
\begin{equation}
\label{upsilon} c_m^{-1}|x|^{\alpha-1} \le\upsilon(x)
\le c_m |x|^{\alpha-1}\qquad  \mbox{if } |x|\le m;
\end{equation}
see \cite{Kh}, Lemma~3.4.1, page 383.
Since $\alpha\in(0,1)$,
the preceding remains valid even when $x=0$, as long
as we recall that $1/0:=\infty$.

For any $\mu\in{\mathcal P}(E\times F)$, the collections
of all probability measures on $E\times F$, and $\beta>0$,
define
%
\begin{equation}
\mathcal{I}_\beta(\mu):= \int\int\frac{\ee^{-\|x-y\|^2/(2|t-s|)}}{
|t-s|^{\beta/2}} \1_{\{s \ne t\}}
\mu(\d s \,\d x) \mu(\d t\, \d y).
\end{equation}

The following forms the first step toward our proof of Theorem~\ref
{th:dimh:ii}.

%
\begin{lemma}\label{lem:main3}
Suppose there exists a $\mu\in{\mathcal P}(E\times F)$ such that
$\mathcal{I}_{d+2(1-\alpha)}(\mu)$ is finite. Then, the random set
$E\cap W^{-1}(F)$ intersects
the closure of $X_\alpha(\R_+)$ with positive probability.
\end{lemma}

%
\begin{remark}\label{rem:other}
It is possible, but significantly
harder, to prove that the sufficient condition of Lemma~\ref{lem:main3}
is also necessary.
We will omit the proof of that theorem, since
we will not need it.
\end{remark}

\begin{pf*}{Proof of Lemma~\ref{lem:main3}} The proof is similar in spirit to that of Proposition~\ref{pr:positiveLeb}.
For all fixed $\epsilon>0$ and probability measures
$\mu$ on $(0,\infty)\times\R^d$, we define
the following parabolic version of \eqref{eq:I},
using the same notation for $\phi_\epsilon:=f_\epsilon*f_\epsilon$, etc.:
%
\begin{equation}
Y_\epsilon(\mu):= \int_0^\infty
\ee^{-t}\, \d t \int\mu(\d s \,\d x) \phi_\epsilon\bigl(W(s)-x\bigr)
\phi_\epsilon\bigl(X_\alpha(t)-s\bigr).
\end{equation}
Just as we did in Lemma~\ref{lem:EZ}, we can find a constant
$c\in(0,\infty)$---depending only on the geometry of $E$ and
$F$---such that
uniformly for all $\mu\in\mathcal{P}(E\times F)$ and $\epsilon\in
(0,1)$,
%
\begin{equation}
\label{EI:inv} \E \bigl[Y_\epsilon(\mu) \bigr] = \int_0^\infty
\ee^{-t} \,\d t \int\mu(\d s\, \d x) (\phi_\epsilon*p_s)
(x) (\phi_\epsilon*g_t) (s) \ge c;
\end{equation}
but now we apply \eqref{upsilon} in place of Lemma~\ref{lem:resolvent}.

And we proceed, just as we did in Proposition~\ref{pr:EZ^2}, and prove that
%
\begin{equation}
\label{main3:L2} \E \bigl(\bigl\llvert Y_\epsilon(\mu)\bigr\rrvert
^2 \bigr) \le\mbox{ const }\cdot \mathcal{I}_{d+2(1-\alpha)}(\mu).
\end{equation}
The only differences between the proof of \eqref{main3:L2}
and that of Proposition~\ref{pr:EZ^2} are the following:
\begin{enumerate}[--]
\item[--] Here we appeal
to Proposition~\ref{pr:isoper:bis}, whereas in Proposition~\ref{pr:EZ^2}
we made use of Proposition~\ref{pr:isoper}; and
\item[--] We apply
\eqref{upsilon} in place of both Proposition~\ref{pr:resolvent}
and Lemma~\ref{lem:resolvent:bis}. Otherwise, the details of
the two computations are essentially the same.
\end{enumerate}

Lemma~\ref{lem:main3} follows from another application of
the Paley--Zygmund lemma \cite{Kh}, page 72, to \eqref{EI:inv}
and \eqref{main3:L2}; the Paley--Zygmund lemma is used in a similar way
as in the proof of the first half of Theorem~\ref{th:main2}. We
omit the details, since this is a standard second-moment computation.
\end{pf*}

Next, we present measure-theoretic conditions
that are respectively sufficient and necessary
for $\mathcal{I}_{d+2(1-\alpha)}(\mu)$ to be
finite for some Borel space--time probability
measure $\mu$ on $E\times F$.

%
\begin{lemma}\label{lem:1}
We always have
%
\begin{equation}
\label{pr:p1} \dim_{_{\mathrm{H}}}(E\times F;\varrho) \le \sup \Bigl\{ \beta>0\dvtx
\inf_{\mu\in\mathcal{P}(E\times F)} \mathcal{I}_\beta(\mu)<\infty \Bigr\}.
\end{equation}
\end{lemma}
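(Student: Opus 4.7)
The plan is to deduce the inequality from Frostman's theorem, applied in the metric space $(\mathbf{S},\varrho) = (\R_+\times\R^d,\varrho)$, combined with a pointwise comparison between the Gaussian-type integrand of $\mathcal{I}_\beta$ and the parabolic Riesz kernel $\varrho^{-\beta}$. Fix any $\beta < \dimh(E\times F\,;\varrho)$. Since $E\times F$ is compact in $\mathbf{S}$, Frostman's theorem in a metric space produces a Borel probability measure $\mu$ on $E\times F$ such that the parabolic $\beta$-energy
\begin{equation*}
\mathcal{J}_\beta(\mu) := \iint \frac{\mu(\d s\,\d x)\,\mu(\d t\,\d y)}{\varrho((s,x)\,;(t,y))^{\beta}}
\end{equation*}
is finite. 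It then suffices to show $\mathcal{I}_\beta(\mu) \le c_\beta\cdot\mathcal{J}_\beta(\mu)$ for some finite constant $c_\beta$.

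The heart of the matter is the pointwise bound
\begin{equation*}
\frac{\ee^{-\|x-y\|^2/(2|t-s|)}}{|t-s|^{\beta/2}} \le \frac{c_\beta}{\varrho((s,x)\,;(t,y))^{\beta}} \qquad\text{for all } s\ne t,
\end{equation*}
which I verify by splitting into the two cases that define the parabolic metric. If $|t-s|^{1/2}\ge\|x-y\|$, then $\varrho=|t-s|^{1/2}$, the exponential factor is bounded by $1$, and the bound is immediate with constant $1$. If $\|x-y\|>|t-s|^{1/2}$, then $\varrho=\|x-y\|$; setting $u:=\|x-y\|^2/|t-s|>1$, the left-hand side equals $u^{\beta/2}\ee^{-u/2}/\|x-y\|^\beta$, and $\sup_{u>0}u^{\beta/2}\ee^{-u/2}<\infty$ yields the inequality. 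Integrating this pointwise bound against $\mu\otimes\mu$ on the off-diagonal set $\{s\ne t\}$ gives $\mathcal{I}_\beta(\mu)\le c_\beta\mathcal{J}_\beta(\mu)<\infty$.

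Taking the supremum over all $\beta < \dimh(E\times F\,;\varrho)$ yields \eqref{pr:p1}. There is no real obstacle here; the only point that requires a bit of care is that Frostman's theorem is being invoked in the metric space $(\mathbf{S},\varrho)$ rather than in Euclidean space, but this is standard since $\varrho$ is a genuine metric and $E\times F$ is compact in $\mathbf{S}$. Note also that replacing $\mathcal{P}(E\times F)$ by $\mathcal{P}_d(E\times F)$ in the statement would require an extra argument, but the lemma as stated does not need this.
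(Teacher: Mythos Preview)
Your proof is correct and follows essentially the same route as the paper: invoke Frostman's theorem in the metric space $(\mathbf{S},\varrho)$ to obtain a measure of finite parabolic Riesz energy, then prove the pointwise bound $\ee^{-\|x-y\|^2/(2|t-s|)}/|t-s|^{\beta/2}\le c_\beta\,\varrho((s,x);(t,y))^{-\beta}$ by the same two-case split according to which term realizes the maximum in $\varrho$. The paper writes the bound as a minimum of the two case-by-case estimates and cites \cite{Taylor:TW} for the parabolic Frostman theorem, but the substance is identical.
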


\begin{pf}
For all space--time probability measures $\mu$,
and $\tau>0$ define the
\emph{space--time $\tau$-dimensional
Bessel--Riesz energy} of $\mu$ as
%
\begin{equation}
\Upsilon_\tau(\mu;\varrho):= \int\int\frac{\mu(\d s\, \d x)
\mu(\d t\, \d y)}{[\varrho((s,x);(t,y))]^\tau}.
\end{equation}
A suitable formulation of Frostman's theorem \cite{Taylor:TW}
implies that
%
\begin{equation}
\label{eq:frost} \dim_{_{\mathrm{H}}}(E\times F;\varrho) = \sup \bigl\{ \tau>0\dvtx
\Upsilon_\tau(\mu;\varrho)<\infty \bigr\}.
\end{equation}

We can consider separately the cases that
$\|x-t\|^2\le|s-t|$ and $\|x-y\|^2>|s-t|$,
and hence deduce that
%
\begin{equation}
\frac{\ee^{-\|x-y\|^2/(2|t-s|)}}{
|s-t|^\beta} \le\min \biggl( \frac{c}{\|x-y\|^{2\beta}}, \frac{1}{|s-t|^\beta}
\biggr),
\end{equation}
where $c:=\sup_{z>1} z^{2\beta} \ee^{-z/2}$ is finite.
Consequently, $\mathcal{I}_{2\beta}(\mu)
\le c' \Upsilon_{2\beta}(\mu;\varrho)$,
with $c':=\max(c,1)$, and
\eqref{pr:p1} follows from \eqref{eq:frost}.
\end{pf}

\begin{lemma}\label{lem:2}
With probability one,
%
\begin{equation}
\dim_{_{\mathrm{H}}} \bigl( E\cap W^{-1}(F) \bigr) \le
\frac{\dim_{_{\mathrm{H}}}(E\times F;\varrho)
-d}{2}.
\end{equation}
\end{lemma}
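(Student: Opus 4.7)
The plan is to prove the bound by a direct first-moment covering argument in space-time: take a $\varrho$-efficient cover of $E\times F$ by parabolic balls, estimate for each ball the probability that the graph of $W$ visits it, and use those visited balls to build a random cover of $E\cap W^{-1}(F)$ in the Euclidean sense whose expected Hausdorff content is small.

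Fix $\beta>\dimh(E\times F;\varrho)$ and $\epsilon\in(0,1)$. By definition of $\dimh(\cdot;\varrho)$---and after the standard reduction replacing arbitrary $\varrho$-sets by parabolic balls at the cost of a universal constant---there exists a cover of $E\times F$ by parabolic balls $Q_i=[t_i-r_i^2,\,t_i+r_i^2]\times\B_{x_i}(r_i)$ with $r_i\le\epsilon$ and $\sum_i r_i^\beta\le\epsilon$. Any $s\in E\cap W^{-1}(F)$ yields $(s\,,W(s))\in Q_i$ for some $i$, which forces the event
$$A_i:=\left\{\exists s\in[t_i-r_i^2,\,t_i+r_i^2]:\,W(s)\in\B_{x_i}(r_i)\right\}.$$
Thus the intervals $I_i:=[t_i-r_i^2,\,t_i+r_i^2]$ indexed by those $i$ for which $A_i$ occurs form a random cover of $E\cap W^{-1}(F)$, with $|I_i|\le 2\epsilon^2$.

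The crucial step is the uniform hitting bound $\P(A_i)\le C r_i^d$, valid for all $(t_i,x_i)\in E\times F$ and all $r_i\in(0,1)$. I would prove it by a standard strong Markov argument: let $\tau$ denote the first entrance time of $W$ into $\B_{x_i}(r_i)$; conditionally on $\tau<\infty$ the increment $W(t_i+r_i^2)-W(\tau)$ is centered Gaussian with covariance of norm at most $r_i^2$, so $\P(\|W(t_i+r_i^2)-W(\tau)\|\le r_i\mid\tau)\ge c_0>0$ uniformly, and therefore
$$c_0\P(A_i)\le \P\left(W(t_i+r_i^2)\in\B_{x_i}(2r_i)\right)\le\left|\B_0(2r_i)\right|\cdot p_{t_i+r_i^2}(0)\le C r_i^d,$$
where I used $t_i\ge 1/q$ to bound the heat kernel uniformly on $E\times F$. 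Setting $\gamma:=(\beta-d)/2$ and taking expectations yields
$$\E\!\left[\sum_i|I_i|^\gamma\mathbf{1}_{A_i}\right]\le 2^\gamma\sum_i r_i^{2\gamma}\P(A_i)\le C\sum_i r_i^{\beta}\le C\epsilon.$$
Sending $\epsilon\downarrow 0$ shows that the $\gamma$-dimensional Hausdorff measure of $E\cap W^{-1}(F)$ vanishes almost surely, hence $\dimh(E\cap W^{-1}(F))\le\gamma$ almost surely; since $\beta>\dimh(E\times F;\varrho)$ was arbitrary, a countable-intersection argument over $\beta$ in a rational sequence decreasing to $\dimh(E\times F;\varrho)$ delivers the claimed inequality.

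The degenerate case $\dimh(E\times F;\varrho)<d$, where the stated bound amounts to $E\cap W^{-1}(F)=\varnothing$ almost surely, follows from the very same computation: picking any $\beta\in(\dimh(E\times F;\varrho)\,,d)$ and using $r_i\le 1$ gives $\sum_i r_i^d\le\sum_i r_i^\beta\le\epsilon$, so $\P(E\cap W^{-1}(F)\neq\varnothing)\le\sum_i\P(A_i)\le C\epsilon\to 0$. The only genuine obstacle is the uniform hitting bound, but as sketched it is an entirely elementary strong Markov argument that works identically for every $d\ge 1$, which is consistent with the absence of a dimension restriction in Lemma \ref{lem:2}.
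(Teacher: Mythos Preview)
Your proof is correct and takes a more direct route than the paper's. The paper argues via codimension: it introduces an independent symmetric $\alpha$-stable process $X_\alpha$ on $\R$, combines your hitting estimate $\P(A_i)\le Cr_i^d$ with the analogous bound $\P\{X_\alpha([0,1])\cap I_i\neq\varnothing\}\le Cr_i^{2(1-\alpha)}$, and concludes from the same covering that $E\cap W^{-1}(F)\cap X_\alpha([0,1])=\varnothing$ a.s.\ whenever $d+2(1-\alpha)>\dimh(E\times F;\varrho)$; McKean's theorem on what $\overline{X_\alpha(\R_+)}$ can hit then converts this into $\dimh(E\cap W^{-1}(F))\le 1-\alpha$ a.s., and one optimizes over rational $\alpha$. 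Your first-moment covering argument uses exactly the same Brownian hitting bound but dispenses with the auxiliary stable process and McKean's theorem entirely---it is shorter and self-contained, whereas the paper's route is in keeping with its global codimension philosophy but is not strictly needed for this particular upper bound.

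One small correction: your $\tau$ should be the first entrance time of $W$ into $\B_{x_i}(r_i)$ \emph{after} $t_i-r_i^2$, not the first entrance from time $0$; otherwise $t_i+r_i^2-\tau$ need not be small and the variance bound on $W(t_i+r_i^2)-W(\tau)$ fails. With that adjustment (and ``covariance at most $2r_i^2$'' in place of ``$r_i^2$''), the argument is clean.
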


\begin{pf}
Choose and fix some $r>0$.
Let $\mathcal{T}(r)$ denote the collection of
all intervals of the form $[t-r^2,t+r^2]$
that are in $[1/q,q]$. Also, let
$\mathcal{S}(r)$ denote the collection of all
closed Euclidean $[\ell^2]$ balls of radius $r$ that are
contained in $[-q,q]^d$.
Recall that $X_\alpha$ is a symmetric
stable process of index $\alpha\in(0,1)$
that is independent of $W$. It is well known
that uniformly for all
$r\in(0,1)$,
%
\begin{equation}
\sup_{I\in\mathcal{T}(r)}\P \bigl\{ X_\alpha\bigl( [0,1]\bigr)
\cap I\neq \O \bigr\}\le\mbox{const}\cdot r^{2(1-\alpha)};
\end{equation}
see \cite{Kh}, Lemma~1.4.3, page 355, for example.
It is just as simple to prove
that the following holds uniformly for all
$r\in(0,1)$:
%
\begin{equation}
\label{Eq:Hitting} \sup_{I\in\mathcal{T}(r)} \sup_{J\in\mathcal{S}(r)} \P
\bigl\{ W(I)\cap J \neq\O \bigr\}\le\mbox{const}\cdot r^d.
\end{equation}
[Indeed, conditional on $\{W(I)\cap J\neq\O\}$,
the random variable $W(t)$ comes to within $r$ of $J$
with a minimum positive probability, where $t$ denotes the smallest
point in $I$.] 
Because $W(I)\cap J\neq\O$ if and only if
$W^{-1}(J)\cap I\neq\O$, it follows that uniformly for all
$r\in(0,1)$,
%
\begin{equation}
\label{3:1} \qquad\sup_{I\in\mathcal{T}(r)}\sup_{J\in\mathcal{S}(r)} \P
\bigl\{ W^{-1}(J)\cap I\cap X_\alpha\bigl([0,1]\bigr) \neq\O
\bigr\} \le\mbox{const}\cdot r^{d+2(1-\alpha)}.
\end{equation}
Define
%
\begin{equation}
\mathcal{R}:= \bigcup_{r\in(0,1)} \bigl\{ I\times J\dvtx  I
\in\mathcal {T}(r)\mbox{ and } J\in\mathcal{S}(r) \bigr\}.
\end{equation}
Thus, $\mathcal{R}$ denotes the collection of all
\textup{``}space--time parabolic rectangles\textup{''} whose $\varrho$-diameter
lies in the interval $(0,1)$.

Suppose $d+2(1-\alpha)>\dim_{_{\mathrm{H}}}(E\times F;\varrho)$.
By the definition of Hausdorff dimension,
and a Vitali-type covering argument (see Mattila
\cite{Mattila}, Theorem~2.8, page 34) for all $\epsilon>0$,
we can find a countable collection $\{E_j\times F_j\}_{j=1}^\infty$
of elements of $\mathcal{R}$ such that:
(i) $\bigcup_{j=1}^\infty(E_j\times F_j)$ contains $E\times F$;
(ii) the $\varrho$-diameter of $E_j\times F_j$ is positive
and less than one (strictly) for all $j\ge1$; and
(iii)~$\sum_{j=1}^\infty| \operatorname{\varrho\mbox{-diam}}(E_j\times F_j)|^{%
d+2(1-\alpha)} \le\epsilon$.
Thanks to \eqref{3:1},
%
\begin{eqnarray}
&&\P \bigl\{ W^{-1}(F)\cap E\cap X_\alpha\bigl([0,1]
\bigr) \neq\O \bigr\} \nonumber\\
&&\qquad \le\sum_{j=1}^\infty \P
\bigl\{ W^{-1}(F_j)\cap E_j\cap
X_\alpha\bigl([0,1]\bigr) \neq\O \bigr\}
\\
&& \qquad \le\mbox{const}\cdot\sum_{j=1}^\infty
\bigl\llvert \operatorname{\varrho\mbox{-diam}}(E_j\times F_j) \bigr
\rrvert ^{d + 2(1-\alpha)} \le\mbox{const}\cdot\epsilon.\nonumber
\end{eqnarray}
Since neither the implied constant
nor the left-most term depend on the value of $\epsilon$,
the preceding shows that
$W^{-1}(F)\cap E\cap X_\alpha([0,1])$ is empty
almost surely.

Now let us recall half of McKean's theorem
\cite{Kh}, Example~2, page 436:
\textit{If $\dim_{_{\mathrm{H}}}(A)>1-\alpha$, then $X_\alpha([0,1])\cap A$
is nonvoid with positive probability.} We apply
McKean's theorem, conditionally, with $A:=W^{-1}(F)\cap E$ to find that
if $d+2(1-\alpha)>\dim_{_{\mathrm{H}}}(E\times F;\varrho)$, then
%
\begin{equation}
\dim_{_{\mathrm{H}}} \bigl( W^{-1}(F)\cap E \bigr) \le1-\alpha \qquad \mbox{almost surely}.
\end{equation}
The preceding is valid almost surely, simultaneously
for all rational values of $1-\alpha$ that are strictly between one
and $\frac{1}2(\dim_{_{\mathrm{H}}}(E\times F;\varrho)-d)$.
Thus, the result follows.
\end{pf}

\begin{pf*}{Proof of Theorem~\ref{th:dimh:ii}}
By the modulus of
continuity of Brownian motion,
there exists a null set off which
$\dim_{_{\mathrm{H}}}W(A) \le2\dim_{_{\mathrm{H}}}A$, simultaneously for
all Borel sets $A\subseteq\R_+$ that might---or
might not---depend on the Brownian path itself.
Since $W(E\cap W^{-1}(F))=W(E)\cap F$,
Lemma~\ref{lem:2} implies that
%
\begin{equation}
\dim_{_{\mathrm{H}}}\bigl(W(E)\cap F\bigr) \le\dim_{_{\mathrm{H}}}(E\times F;
\varrho) -d \qquad \mbox{almost surely}.
\end{equation}

For the remainder of the proof, we assume that $d\ge2$,
and propose to prove that
%
\begin{equation}
\bigl\llVert \dim_{_{\mathrm{H}}}\bigl(W(E)\cap F\bigr) \bigr\rrVert
_{L^\infty(\P)} \ge\dim_{_{\mathrm{H}}}(E\times F;\varrho) -d.
\end{equation}
Henceforth, we assume without loss of generality that
%
\begin{equation}
\label{assumeTW} \dim_{_{\mathrm{H}}}(E\times F;\varrho) >d;
\end{equation}
for there is nothing left to prove
otherwise. In accord with the theory of Taylor and Watson
\cite{Taylor:TW}, \eqref{assumeTW} implies that
$\P\{W(E)\cap F\neq\O\}>0$.

According to Kaufman's uniform-dimension
theorem \cite{Kaufman69},
the Hausdorff dimension of $W(E)\cap F$ is almost surely
equal to twice the Hausdorff dimension of
$E\cap W^{-1}(F)$. Therefore, it suffices to prove the
following in the case that $d\ge2$:
%
\begin{equation}
\label{suffice} \bigl\llVert \dim_{_{\mathrm{H}}} \bigl(E\cap W^{-1}(F)
\bigr) \bigr\rrVert _{L^\infty(\P)} \ge \frac{\dim_{_{\mathrm{H}}}(E\times F;\varrho)
-d}{2},
\end{equation}
as long as the right-hand side is positive.
If $\alpha\in(0,1)$ satisfies
%
\begin{equation}
\label{suffice1} 1-\alpha< \frac{\dim_{_{\mathrm{H}}}(E\times F;\varrho)
-d}{2},
\end{equation}
than Lemma~\ref{lem:1} implies that
$\mathcal{I}_{d+2(1-\alpha)}(\mu)<\infty$
for some $\mu\in\mathcal{P}(E\times F)$.\vspace*{2pt}
Thanks to Lemma~\ref{lem:main3},
$E\cap W^{-1}(F)\cap\overline{X_\alpha([0,1])}
\neq\O$ with positive probability.
Consequently,
%
\begin{equation}
\label{this} \P \bigl\{ \dim_{_{\mathrm{H}}}\bigl(E\cap W^{-1}(F)
\bigr) \ge1-\alpha \bigr\}>0,
\end{equation}
because the second half of McKean's theorem implies
that \emph{if $\dim_{_{\mathrm{H}}}(A)<1-\alpha$, then $\overline{
X_\alpha(\R_+)}\cap A=\O$ almost surely.}
Since \eqref{this} holds for all $\alpha\in(0,1)$ that
satisfy \eqref{suffice1}, \eqref{suffice} follows. This
completes the proof.
\end{pf*}

%
\begin{remark}
Let us mention the following byproduct of
our proof of Theorem~\ref{th:dimh:ii}: For every
$d\ge1$,
%
\begin{equation}
\bigl\llVert \dim_{_{\mathrm{H}}} \bigl( E\cap W^{-1}(F) \bigr)\bigr
\rrVert _{L^\infty(\P)} = \frac{\dim_{_{\mathrm{H}}} ( E\times F;\varrho )- d}{2}.
\end{equation}
When $d=1$, this was found first by
Kaufman \cite{Kaufman72}, who used other arguments
(for the harder half).
See Hawkes \cite{Hawkes78}
for similar results in case $W$ is replaced by
a stable subordinator of index $\alpha\in(0,1)$.
\end{remark}

We conclude this paper with some problems that continue to elude us.

\begin{OP*}
Theorems \ref{th:dimh:ii} and \ref{th:dimh} together
imply that when $d\ge2$ and $F \subset\R^d$ has Lebesgue measure 0,
%
\begin{equation}
\label{eq:OP} \sup \Bigl\{\gamma>0\dvtx  \inf_{\mu\in\mathcal{P}_d (E\times F)}
\mathcal{E}_\gamma(\mu)<\infty \Bigr\} = \dim_{_{\mathrm{H}}}(E\times F;
\rho)-d.
\end{equation}
The preceding is a kind of \textup{``}parabolic Frostman theorem.\textup{''}
And we saw in the \hyperref[sec1]{Introduction} that \eqref{eq:OP} is
in general false when $d=1$. We would like to
better understand why the one-dimensional case is so different
from the case $d\ge2$. Thus, we are led naturally to
a number of questions, three of which we state below:
\begin{enumerate}[P3.]
\item[P1.] Equation \eqref{eq:OP} is, by itself,
a theorem of geometric measure theory.
Therefore, we ask, \textup{``}\textit{Is there a direct proof of
\textup{\eqref{eq:OP}} that does not involve random processes}, \textit{broadly
speaking}, \textit{and
Kaufman}'\textit{s uniform-dimension
theorem \cite{Kaufman69}}, \textit{in particular}\textup{''}?
\item[P2.] When $d\ge2$, \eqref{eq:OP} gives an interpretation
of the
capacity form on the left-hand side of \eqref{eq:OP}
in terms of the geometric object on
the right-hand side. Can we understand the left-hand
side of \eqref{eq:OP} geometrically in the case that $d=1$?
\item[P3.] The following interesting question is due to an anonymous
referee: Are there quantitative relationships between a rough hitting-type
probability of the form $\P\{\dim_{_{\mathrm{H}}}(W(E)\cap F)>\gamma\}$ and
the new capacity form of Benjamini et al. \cite{BPP97} (see also
\cite{MoertersPeres}, Theorem~8.24)?
We suspect the answer is \textup{``}yes,\textup{''} but do not have a proof.
\end{enumerate}
\end{OP*}

\section*{Acknowledgments}
Many hearty thanks are due to
Professors Gregory Lawler and Yuval Peres. The former
showed us the counterexample in the \hyperref[sec1]{Introduction}, and the latter
introduced us to the problem that is being considered here.

We thank the anonymous referee for pointing out several mistakes in an
earlier formulation of Theorem~\ref{th:dimh} in a previous draft of this
manuscript.
%

%



\printaddresses


\begin{thebibliography}{25}


\bibitem{BPP97}
\begin{barticle}[mr]
\bauthor{\bsnm{Benjamini},~\bfnm{Itai}\binits{I.}},
\bauthor{\bsnm{Pemantle},~\bfnm{Robin}\binits{R.}} \AND
\bauthor{\bsnm{Peres},~\bfnm{Yuval}\binits{Y.}}
(\byear{1995}).
\btitle{Martin capacity for {M}arkov chains}.
\bjournal{Ann. Probab.}
\bvolume{23}
\bpages{1332--1346}.
\bid{issn={0091-1798}, mr={1349175}}
\end{barticle}
\bptok{imsref}%
\endbibitem



\bibitem{Doob}
\begin{bbook}[mr]
\bauthor{\bsnm{Doob},~\bfnm{J.~L.}\binits{J.~L.}}
(\byear{1984}).
\btitle{Classical Potential Theory and Its Probabilistic Counterpart}.
\bseries{Grundlehren der Mathematischen Wissenschaften [Fundamental Principles of Mathematical Sciences]}
\bvolume{262}.
\bpublisher{Springer},
\blocation{New York}.
\bid{doi={10.1007/978-1-4612-5208-5}, mr={0731258}}
\bptnote{check year}%
\end{bbook}
\bptok{imsref}%
\endbibitem

\bibitem{FK}
\begin{barticle}[mr]
\bauthor{\bsnm{Foondun},~\bfnm{Mohammud}\binits{M.}} \AND
\bauthor{\bsnm{Khoshnevisan},~\bfnm{Davar}\binits{D.}}
(\byear{2013}).
\btitle{On the stochastic heat equation with spatially-colored random forcing}.
\bjournal{Trans. Amer. Math. Soc.}
\bvolume{365}
\bpages{409--458}.
\bid{mr={2984063}}
\end{barticle}
\bptok{imsref}%
\endbibitem

\bibitem{Hawkes78}
\begin{barticle}[mr]
\bauthor{\bsnm{Hawkes},~\bfnm{John}\binits{J.}}
(\byear{1978}).
\btitle{Measures of {H}ausdorff type and stable processes}.
\bjournal{Mathematika}
\bvolume{25}
\bpages{202--212}.
\bid{doi={10.1112/S002557930000944X}, issn={0025-5793}, mr={0533127}}
\end{barticle}
\bptok{imsref}%
\endbibitem

\bibitem{Hawkes:81}
\begin{barticle}[mr]
\bauthor{\bsnm{Hawkes},~\bfnm{John}\binits{J.}}
(\byear{1981}).
\btitle{Trees generated by a simple branching process}.
\bjournal{J. Lond. Math. Soc. (2)}
\bvolume{24}
\bpages{373--384}.
\bid{doi={10.1112/jlms/s2-24.2.373}, issn={0024-6107}, mr={0631950}}
\end{barticle}
\bptok{imsref}%
\endbibitem

\bibitem{Kahane85}
\begin{bbook}[mr]
\bauthor{\bsnm{Kahane},~\bfnm{Jean-Pierre}\binits{J.-P.}}
(\byear{1985}).
\btitle{Some Random Series of Functions},
\bedition{2nd} ed.
\bseries{Cambridge Studies in Advanced Mathematics}
\bvolume{5}.
\bpublisher{Cambridge Univ. Press},
\blocation{Cambridge}.
\bid{mr={0833073}}
\end{bbook}
\bptok{imsref}%
\endbibitem

\bibitem{Kaufman69}
\begin{barticle}[mr]
\bauthor{\bsnm{Kaufman},~\bfnm{Robert}\binits{R.}}
(\byear{1969}).
\btitle{Une propri\'et\'e m\'etrique du mouvement brownien}.
\bjournal{C. R. Acad. Sci. Paris S\'er. A--B}
\bvolume{268}
\bpages{A727--A728}.
\bid{mr={0240874}}
\end{barticle}
\bptok{imsref}%
\endbibitem

\bibitem{Kaufman72}
\begin{barticle}[mr]
\bauthor{\bsnm{Kaufman},~\bfnm{R.}\binits{R.}}
(\byear{1972}).
\btitle{Measures of {H}ausdorff-type, and {B}rownian motion}.
\bjournal{Mathematika}
\bvolume{19}
\bpages{115--119}.
\bid{issn={0025-5793}, mr={0319280}}
\end{barticle}
\bptok{imsref}%
\endbibitem

\bibitem{Kaufman-Wu}
\begin{barticle}[mr]
\bauthor{\bsnm{Kaufman},~\bfnm{Robert}\binits{R.}} \AND
\bauthor{\bsnm{Wu},~\bfnm{Jang~Mei}\binits{J.~M.}}
(\byear{1982}).
\btitle{Parabolic potential theory}.
\bjournal{J. Differential Equations}
\bvolume{43}
\bpages{204--234}.
\bid{doi={10.1016/0022-0396(82)90091-2}, issn={0022-0396}, mr={0647063}}
\end{barticle}
\bptok{imsref}%
\endbibitem

\bibitem{Kh}
\begin{bbook}[mr]
\bauthor{\bsnm{Khoshnevisan},~\bfnm{Davar}\binits{D.}}
(\byear{2002}).
\btitle{Multiparameter Processes: An Introduction to Random Fields}.
\bpublisher{Springer},
\blocation{New York}.
\bid{mr={1914748}}
\end{bbook}
\bptok{imsref}%
\endbibitem

\bibitem{KX02}
\begin{barticle}[mr]
\bauthor{\bsnm{Khoshnevisan},~\bfnm{Davar}\binits{D.}} \AND
\bauthor{\bsnm{Xiao},~\bfnm{Yimin}\binits{Y.}}
(\byear{2002}).
\btitle{Level sets of additive {L}\'evy processes}.
\bjournal{Ann. Probab.}
\bvolume{30}
\bpages{62--100}.
\bid{doi={10.1214/aop/1020107761}, issn={0091-1798}, mr={1894101}}
\end{barticle}
\bptok{imsref}%
\endbibitem

\bibitem{KX05}
\begin{barticle}[mr]
\bauthor{\bsnm{Khoshnevisan},~\bfnm{Davar}\binits{D.}} \AND
\bauthor{\bsnm{Xiao},~\bfnm{Yimin}\binits{Y.}}
(\byear{2005}).
\btitle{L\'evy processes: Capacity and {H}ausdorff dimension}.
\bjournal{Ann. Probab.}
\bvolume{33}
\bpages{841--878}.
\bid{doi={10.1214/009117904000001026}, issn={0091-1798}, mr={2135306}}
\end{barticle}
\bptok{imsref}%
\endbibitem

\bibitem{KX}
\begin{barticle}[mr]
\bauthor{\bsnm{Khoshnevisan},~\bfnm{Davar}\binits{D.}} \AND
\bauthor{\bsnm{Xiao},~\bfnm{Yimin}\binits{Y.}}
(\byear{2009}).
\btitle{Harmonic analysis of additive {L}\'evy processes}.
\bjournal{Probab. Theory Related Fields}
\bvolume{145}
\bpages{459--515}.
\bid{doi={10.1007/s00440-008-0175-5}, issn={0178-8051}, mr={2529437}}
\end{barticle}
\bptok{imsref}%
\endbibitem

\bibitem{Lyons}
\begin{barticle}[mr]
\bauthor{\bsnm{Lyons},~\bfnm{Russell}\binits{R.}}
(\byear{1990}).
\btitle{Random walks and percolation on trees}.
\bjournal{Ann. Probab.}
\bvolume{18}
\bpages{931--958}.
\bid{issn={0091-1798}, mr={1062053}}
\end{barticle}
\bptok{imsref}%
\endbibitem

\bibitem{Mattila}
\begin{bbook}[mr]
\bauthor{\bsnm{Mattila},~\bfnm{Pertti}\binits{P.}}
(\byear{1995}).
\btitle{Geometry of Sets and Measures in {E}uclidean Spaces: Fractals and Rectifiability}.
\bseries{Cambridge Studies in Advanced Mathematics}
\bvolume{44}.
\bpublisher{Cambridge Univ. Press},
\blocation{Cambridge}.
\bid{mr={1333890}}
\end{bbook}
\bptok{imsref}%
\endbibitem

\bibitem{MoertersPeres}
\begin{bbook}[mr]
\bauthor{\bsnm{M{\"o}rters},~\bfnm{Peter}\binits{P.}} \AND
\bauthor{\bsnm{Peres},~\bfnm{Yuval}\binits{Y.}}
(\byear{2010}).
\btitle{Brownian Motion}.
\bpublisher{Cambridge Univ. Press},
\blocation{Cambridge}.
\bid{mr={2604525}}
\end{bbook}
\bptok{imsref}%
\endbibitem


\bibitem{Peres}
\begin{barticle}[mr]
\bauthor{\bsnm{Peres},~\bfnm{Yuval}\binits{Y.}}
(\byear{1996}).
\btitle{Remarks on intersection-equivalence and capacity-equivalence}.
\bjournal{Ann. Inst. H. Poincar\'e Phys. Th\'eor.}
\bvolume{64}
\bpages{339--347}.
\bid{issn={0246-0211}, mr={1400296}}
\end{barticle}
\bptok{imsref}%
\endbibitem

\bibitem{Peres:99}
\begin{bincollection}[mr]
\bauthor{\bsnm{Peres},~\bfnm{Yuval}\binits{Y.}}
(\byear{1999}).
\btitle{Probability on trees: An introductory climb}.
In \bbooktitle{Lectures on Probability Theory and Statistics ({S}aint-{F}lour, 1997)}.
\bseries{Lecture Notes in Mathematics}
\bvolume{1717}
\bpages{193--280}.
\bpublisher{Springer},
\blocation{Berlin}.
\bid{doi={10.1007/978-3-540-48115-7_3}, mr={1746302}}
\end{bincollection}
\bptok{imsref}%
\endbibitem

\bibitem{Taylor:66}
\begin{barticle}[mr]
\bauthor{\bsnm{Taylor},~\bfnm{S.~J.}\binits{S.~J.}}
(\byear{1966}).
\btitle{Multiple points for the sample paths of the symmetric stable process}.
\bjournal{Z.~Wahrsch. Verw. Gebiete}
\bvolume{5}
\bpages{247--264}.
\bid{mr={0202193}}
\end{barticle}
\bptok{imsref}%
\endbibitem

\bibitem{Taylor:TW}
\begin{barticle}[mr]
\bauthor{\bsnm{Taylor},~\bfnm{S.~J.}\binits{S.~J.}} \AND
\bauthor{\bsnm{Watson},~\bfnm{N.~A.}\binits{N.~A.}}
(\byear{1985}).
\btitle{A {H}ausdorff measure classification of polar sets for the heat equation}.
\bjournal{Math. Proc. Cambridge Philos. Soc.}
\bvolume{97}
\bpages{325--344}.
\bid{doi={10.1017/S0305004100062873}, issn={0305-0041}, mr={0771826}}
\end{barticle}
\bptok{imsref}%
\endbibitem

\bibitem{Watson}
\begin{bmisc}[auto:STB|2014/02/12|14:17:21]
\bauthor{\bsnm{Watson},~\bfnm{N.~A.}\binits{N.~A.}}
(\byear{1976}).
\bhowpublished{Green functions, potentials, and the Dirichlet problem for the heat equation.
\textit{Proc. Lond. Math. Soc.} (3) \textbf{33} 251--298.
[Corrigendum: \textit{Proc. Lond. Math. Soc.} (3) \textbf{37} (1978) 32--34. MR0499239]}
\bid{mr={0425145}}
\end{bmisc}
\bptok{imsref}%
\endbibitem

\bibitem{Watson:TC}
\begin{barticle}[mr]
\bauthor{\bsnm{Watson},~\bfnm{N.~A.}\binits{N.~A.}}
(\byear{1978}).
\btitle{Thermal capacity}.
\bjournal{Proc. Lond. Math. Soc. (3)}
\bvolume{37}
\bpages{342--362}.
\bid{doi={10.1112/plms/s3-37.2.342}, issn={0024-6115}, mr={0507610}}
\end{barticle}
\bptok{imsref}%
\endbibitem

\end{thebibliography}
\end{document}